\newtheorem{theorem}{Theorem}[section]
\newtheorem{corollary}[section]{Corollary}
\newtheorem{lemma}[section]{Lemma}
\newtheorem{remark}[section]{Remark}
\newtheorem{example}[section]{Example}
\newtheorem{fact}[section]{Fact}
\newcommand{\Ob}{\mathrm{Ob}}
\newcommand{\E}{\mathcal{M}}
\newcommand{\s}{\mathcal{S}}
\newcommand{\A}{\mathcal{A}}
\newcommand{\I}{\mathrm{I}}
\newcommand{\Aut}{\mathrm{Aut}}
\newcommand{\HH}{\mathbb{H}}
\title{On the third cohomology group of commutative monoids}
\author{M. Calvo-Cervera,   A.M. Cegarra,  B.A. Heredia}
\address{\newline
Departamento de \'Algebra, Facultad de Ciencias, Universidad de Granada.
\newline 18071 Granada, Spain \newline
 mariacc@ugr.es \ acegarra@ugr.es \ baheredia@ugr.es
                            }
 \keywords{Monoidal category \and groupoid, commutative monoid,
cohomology}
\thanks{
The authors are much indebted to the referee,
whose useful observations greatly improved our exposition.
This work has been supported by `Direcci\'on General de
Investigaci\'on' of Spain, Project: MTM2011-22554, and for the first
and third authors also by FPU grants FPU12-01112 and  AP2010-3521,
respectively.
}
\begin{document}
\maketitle

\begin{abstract}
We interpret Grillet's symmetric third cohomology classes of
commutative monoids in terms of strictly symmetric  monoidal abelian
groupoids. We state and prove a classification result that
generalizes the well-known one for strictly commutative Picard
categories by Deligne, Fr\"ohlich and Wall, and Sinh.

\end{abstract}

\section{Introduction and summary}
The category  of commutative monoids is tripleable (monadic) over
the category of sets \cite{Mac1}, and so it is natural to specialize
Barr-Beck cotriple cohomology \cite{Barr-Beck} to define a
cohomology theory for commutative monoids. This was done in the 1990s by Grillet, to whose papers
\cite{grillet1,grillet3,grillet2} and book \cite{grillet4} we
refer the readers interested in cohomology theory for
commutative monoids (the topic of this paper). Although in
Subsection \ref{grillet} we review  the basic facts about the
resulting Grillet's cohomology, let us briefly recall here that, for
each commutative monoid $M$, its cohomology groups in this theory,
$H^n(M,\A)$, take coefficients in abelian group valued functors
 $\A$ on the category $\HH M$, whose
objects are the elements of $M$ and whose morphisms are pairs
$(a,b):a\to ab$, $a,b\in M$. Since these cohomology groups
$H^n(M,\A)$ can be computed, at less in low dimensions, by means of
{\em symmetric cochains}, they are usually referred as the {\em
symmetric cohomology groups} of the commutative monoid $M$.

For an arbitrary monoid $M$, that is, non necessarily commutative,
and $\A:\mathbb{D}M\to \mathbf{Ab}$ any abelian group valued functor
on the Leech category $\mathbb{D}M$ of morphisms $(a,b,c):b\to abc$,
 there are defined Leech's cohomology groups $H^n_{\mathrm
L}(M,\A)$ \cite{leech}. When the monoid $M$ is commutative, and
$\A:\HH M \to \mathbf{Ab}$ is any functor, then both cohomology
groups $H^n(M,\A)$ and $H^n_{\mathrm L}(M,\A)$ are defined, where
the coefficients for the Leech cohomology are here obtained by
composing $\A$ with canonical functor $\mathbb{D}M\to \HH M$,
$(a,b,c)\mapsto (b,ac)$. Although in dimension one we have that
 $H^1(M,\A)=H^1_{\mathrm L}(M,\A)$, in higher dimensions the
cohomology groups $H^n(M,\A)$ and $H^n_{\mathrm L}(M,\A)$ are,
however, different. Indeed, one easily argues that Leech cohomology
groups do not take properly account of the commutativity of the
monoid, in contrast to what happens with Grillet ones. Thus, for
example, while $H^2_{\mathrm L}(M,\A)$ classifies {\em all} group
coextensions of $M$ by $\A$ \cite[\S 2.4.9]{leech}, \cite[Theorem
2]{wells}, the symmetric two-dimensional cohomology group
$H^2(M,\A)$ classifies {\em commutative} group coextensions of $M$
by $\A$ \cite[\S V.4]{grillet4}.

In \cite{C-C-H}, we gave a natural interpretation for Leech
3-dimensional cohomology classes in terms of {\em monoidal abelian
groupoids}, that is, of small categories $\E$ all whose arrows are
invertible and whose isotropy groups $\mathrm{Aut}_\E(x)$ are all
abelian, endowed with a monoidal structure by a tensor product
$\otimes:\E\times \E\to \E$, a unit object $\I$, and coherent and
natural associativity and unit constraints $(x\otimes y)\otimes
z\cong x\otimes (y\otimes z)$, $x\otimes \I\cong x$, and $\I\otimes
x\cong x$ \cite{Mac2}. Thus, in \cite[Theorem 4.2]{C-C-H}, it is
stated that monoidal equivalence classes of monoidal abelian
groupoids are in one-to-one correspondence with isomorphism classes
of triples $(M,\A,k)$, consisting of a (non necessarily commutative)
monoid $M$, an abelian group valued functor $\A$ on the Leech
category $\mathbb{D}M$,  and a Leech 3-cohomology class $k\in
H^3_{\mathrm L}(M,\A)$.

In this paper, our goal is to state and prove a similar
interpretation for Grillet symmetric 3-cohomology classes, now in
terms of {\em strictly symmetric} (or {\em strictly commutative})
monoidal abelian groupoids \cite{deligne,Mac2,Saa}, that is,
monoidal abelian groupoids, as above, but now endowed with coherent
and natural isomorphisms $\boldsymbol{c}_{x,y}:x\otimes y\cong
y\otimes x$, satisfying the symmetry and strictness conditions
$\boldsymbol{c}_{y,x}\, \boldsymbol{c}_{x,y}=id_{x\otimes y}$ and
$\boldsymbol{c}_{x,x}=id_{x\otimes x}$. Our result here can be
summarized as follows (see Theorem \ref{mainthcl}):

\begin{itemize}{\em
\item Each symmetric $3$-cocycle $h\in Z^3(M,\A)$, of a commutative
monoid $M$ with coefficients in an abelian group valued functor
$\A:\HH M\to \mathbf{Ab}$, gives rise to a strictly symmetric
monoidal abelian groupoid $$ \s(M,\A,h).$$

\item For any strictly symmetric monoidal abelian groupoid  $\E$,
there exist a commutative monoid $M$, a functor $\A:\HH M\to
\mathbf{ Ab}$, a symmetric $3$-cocycle $h\in Z^3(M,\A)$, and a
symmetric monoidal equivalence
$$
\s(M,\A,h) \simeq \E.
$$

\item  For any two symmetric $3$-cocycles $h\in Z^3(M,\A)$ and
$h'\in Z^3(M',\A')$, there is a symmetric monoidal equivalence
$$
\s(M,\A,h)\simeq \s(M',\A',h')
$$
 if and and only if there exist an isomorphism of monoids
$i:M\cong M'$ and a natural isomorphism  $\psi:\A\cong \A'i$,
such that the equality of cohomology classes below holds.
$$
[h]=\psi_*^{-1}i^*[h']\in H^3(M,\A)
$$
}
\end{itemize}

Thus, triples $(M,\A, k)$, with $M$ a commutative monoid, $\A:\HH
M\to \mathbf{ Ab}$ a functor,  and $k\in H^3(M,\A)$ a symmetric
3-cohomology class, provide complete invariants for the
classification  of strictly symmetric monoidal abelian groupoids,
where two of them connected by a symmetric monoidal equivalence are
considered the same.

Our result particularizes to strictly commutative Picard categories
by giving, as a corollary, Deligne's well-known classification for
them \cite{deligne}, also proved independently by Fr\"{o}hlich and
Wall in \cite{Fr-Wa} and by Sinh in \cite{Sih,Sih2}. Indeed, in the
very special case where $M=G$ is an abelian group, any abelian group
valued functor on $\HH G$ is naturally equivalent to the constant
functor given by an abelian group $A$, and the symmetric
3-dimensional cohomology group $H^3(G,A)$ vanishes, whence Deligne's
result follows:  {\em Strictly commutative Picard categories are
classified by pairs $(G,A)$ of abelian groups}.

The organization of the paper is simple. After this introduction, it
contains two sections. The first is dedicated to stating a minimum
of necessary concepts and terminology, by reviewing some facts
concerning Grillet cohomology of commutative monoids (Subsection
\ref{grillet}) and symmetric monoidal groupoids (Subsection
\ref{symmon}). The second section comprises our classification
theorem for strictly symmetric monoidal abelian groupoids by means
of symmetric 3-cohomology classes.

\section{Preliminaries} This section aims to make this paper as
 self-contained as possible; hence, at the same
time as fixing notations and terminology, we also review some
necessary aspects and results about cohomology of commutative
monoids and symmetric monoidal categories that will be used
throughout the paper. However, the material in this preliminary
section is perfectly standard by now, so the expert reader may skip
most of it. For the cohomology  theory of commutative
monoids we mainly refer the reader to Grillet \cite[Chapters V, XII,
XIII, and XIV]{grillet4}, and for symmetric monoidal (= tensor)
categories to Mac Lane \cite{Mac2,Mac1} and Saavedra \cite{Saa}.

\subsection{Grillet cohomology of commutative monoids: Symmetric
cocycles.}\label{grillet}  Like most of cohomology theories in
Algebra, the cohomology of commutative monoids is a particular
instance of the cotriple cohomology by  Barr and Beck
\cite{Barr-Beck}. Briefly, let us recall that the category of commutative
monoids is tripleable over the category of sets and, for any given
commutative monoid $M$, the resulting cotriple
($\mathbb{G},\varepsilon ,\delta$) in the comma category
$\mathbf{CMon}\!\downarrow_M$, of commutative monoids over $M$, is
as follows. For each commutative monoid
 $X\overset{p}\to M$ over $M$, $$\mathbb{G}(X\stackrel{p }{\to
}M)=\mathbb{N}[X]\stackrel{\overline{p }}\to M,$$ where
$\mathbb{N}[X]$ is the free commutative monoid on the underlying set
$X$, and $\overline{p}$ is the homomorphism such that
$\overline{p}[x]=p(x)$ for any $x\in X$. The counit $\delta
:\mathbb{G}\to id$ sends $X\to M$ to the homomorphism in the comma
category $\delta:\mathbb{N}[X]\to X$ such that $\delta [x]=x$, and
the comultiplication $\varepsilon :\mathbb{G}\to \mathbb{G}^2$
carries each $X\to M$ to the homomorphism $\mathbb{N}[X]\to
\mathbb{N}[\mathbb{N}[X]]$ such that $\varepsilon [x]=[[x]]$, for
 $x\in X$. This cotriple produces a simplicial object
 $\mathbb{G}_\bullet$
in the category of endofunctors on $\mathbf{CMon}\!\downarrow_M$,
which is defined by $\mathbb{G}_n=\mathbb{G}^{n+1}$, with face and
degeneracy operators $d_i=\mathbb{G}^{n-i}\delta
\mathbb{G}^i:\mathbb{G}_n\to {\mathbb{G}}_{n-1}$ and
$s_i={\mathbb{G}^{n-i}\varepsilon
\mathbb{G}^i:}{\mathbb{G}}_{n}\to {\mathbb{G}}_{n+1},$ $0\leq i\leq
n$. Then, for any abelian group object  $\mathbf{A}$ in
$\mathbf{CMon}\!\downarrow_M$, one obtains a cosimplicial abelian
group $\mathrm{Hom}({\mathbb{G}}_{\bullet }(1_M),\mathbf{A})$, whose
associated cochain complex obtained by taking alternating sums of
the coface operators
$$\xymatrix {0\to
\mathrm{Hom}(\mathbb{G}(1_M),\mathbf{A})\stackrel{\partial ^0}{\to
}\mathrm{Hom}(\mathbb{G}^2(1_M),\mathbf{A})\stackrel{\partial ^1}{\to
}\mathrm{Hom}(\mathbb{G}^3(1_M),\mathbf{A})\overset{\partial
^2}\to\cdots \hspace{0.5cm} \big(\partial^n=
\sum\limits_{i=0}^{n+1}d_i^*\big)}$$ provides the {\em cotriple cohomology
groups of the commutative monoid $M$ with coefficients in
$\mathbf{A}$} by
$$
H^n_{\mathbb G}(M,\mathbf{A})=H^{n}(\mathrm{Hom}({\mathbb{G}}_{\bullet }(1_M),\mathbf{A})).
$$

In \cite{grillet1}, Grillet observes that, for any given commutative
monoid $M$, the category of abelian group objects in
$\mathbf{CMon}\downarrow_M$, is equivalent to the category of
abelian group valued functors $$\A:\HH M\to \mathbf{Ab},$$ where
$\HH M$ is the category with object set $M$ and arrow set $M\times
M$, where $(a,b):a\to ab$. Composition is given by
$(ab,c)(a,b)=(a,bc)$, and the identity of an object $a$ is $(a,1)$.
An abelian group valued functor, $\A:\HH M\to \mathbf{Ab}$, thus
consists of abelian groups $\A_a$, $a\in M$, and homomorphisms
$b_*:\A_a\to\A_{ab}$, $a,b\in M$, such that, for any $a,b,c\in M$,
$b_*c_*=(bc)_*:\A_a\to \A_{abc}$ and, for any $a\in M$,
$1_*=id_{\A_a}$. We refers to \cite[Chap. XXII, \S2]{grillet4} for
details but, briefly, let us say that the abelian group object
defined by an abelian group valued functor $\A:\HH M\to \mathbf{Ab}$
can be written as
$$E(M,\A)\to M,$$ where the {\em crossed
product} commutative monoid $E(M,\A)$ is the set $\bigcup_{a\in
M}\A_a\times\{a\}$  of all ordered pairs $(u_a,a)$ with
$a\in M$ and $u_a\in \A_a$, with multiplication given by
$$(u_a,a)(u_b,b)= (a_*u_b+ b_*u_a,ab).$$ The monoid homomorphism
$E(M,\A)\to\!M$ is the obvious projection
   $(u_a,a)\mapsto a$, and the internal group operation
$$E(M,\A)\times_M E(M,\A) \overset{+}\longrightarrow E(M,\A) $$
is defined by
   $(u_a,a)+(v_a,a)= (u_a+v_a,a)$.

Furthermore, in \cite{grillet1,grillet3,grillet2}, Grillet shows an
algebraically more lucid description of the low dimensional
cohomology groups
$$H^n(M,\A):=H^{n-1}_\mathbb{G}(M,E(M,\A)\to\!M)$$ by means of a
specific manageable complex (see also the recent work \cite{K-P})
\begin{equation}\label{sccomplex}
  0\to  C^1(M,\A)\overset{\partial}\longrightarrow  C^2(M,\A)\overset{\partial}
\longrightarrow
 C^3(M,\A)\overset{\partial}\longrightarrow  C^4(M,\A),
\end{equation}
called the complex of (normalized on $1\in M$) {\em symmetric
cochains} on $M$ with values in $\A$, which is defined as follows
(below, $\bigcup_{a\in M}\A_a$ is the disjoint union set of the
groups $\A_a$):

\vspace{0.2cm} $\bullet$ A {\em symmetric $1$-cochain} is a function
$f:M\to \bigcup_{a\in M}\A_a$, such that $f(a)\in \A_a$ and
$f(1)=0$.

\vspace{0.2cm} $\bullet$ A {\em symmetric $2$-cochain} is a function
$g:M^2\to \bigcup_{a\in M}\A_a$, such that $g(a,b)\in \A_{ab}$,
$$g(a,b)=g(b,a),$$ and $g(a,1)=0$.

\vspace{0.2cm} $\bullet$ A {\em symmetric $3$-cochain} is a function
$h:M^3\to \bigcup_{a\in M}\A_a$, such that $h(a,b,c)\in \A_{abc}$,
\begin{equation}\label{eqg3c}h(c,b,a)+h(a,b,c)=0,\hspace{0.3cm}
h(a,b,c)+h(b,c,a)+h(c,a,b)=0\end{equation} and $h(a,b,1)=0$.

\vspace{0.2cm} $\bullet$ A {\em symmetric $4$-cochain} is a function
$t:M^4\to \bigcup_{a\in M}\A_a$, such that $t(a,b,c,d)\in
\A_{abcd}$,
$$\begin{array}{l} t(a,b,b,a)=0,\hspace{0.4cm}t(d,c,b,a)+t(a,b,c,d)=0,\\
t(a,b,c,d)-t(b,c,d,a)+t(c,d,a,b)-t(d,a,b,c)=0,\\
t(a,b,c,d)-t(b,a,c,d)+t(b,c,a,d)-t(b,c,d,a)=0,
\end{array}$$ and
$t(a,b,c,1)=0$.

Under pointwise addition, these symmetric $n$-cochains constitute
the abelian groups $C^n(M,\A)$ in $(\ref{sccomplex})$, $1\leq n\leq
4$. The coboundary homomorphisms are defined by
\begin{flalign}\nonumber\bullet \hspace{0.2cm}&(\partial^1 f)(a,b)=
a_*f(b)-f(ab)+b_*f(a),&
\end{flalign}
\begin{flalign}\nonumber\bullet \hspace{0.2cm}&(\partial^2 g)(a,b,c)= a_*g(b,c)-g(ab,c)+g(a,bc)-c_*g(a,b),&
\end{flalign}
\begin{flalign}\label{eqg.3}\bullet \hspace{0.2cm}&(\partial^3 h)(a,b,c,d)= a_*h(b,c,d)-h(ab,c,d)+h(a,bc,d)-h(a,b,cd)+
d_*h(a,b,c).&
\end{flalign}

The groups
$$\begin{array}{l}Z^n(M,\mathcal{A})=\mathrm{Ker}\big(\partial^n: C^n(M,\mathcal{A})\to
 C^{n+1}(M,\mathcal{A})\big),\\[4pt]
B^n(M,\mathcal{A})=\mathrm{Im}\big(\partial^{n-1}:C^{n-1}(M,\mathcal{A})\to
 C^{n}(M,\mathcal{A})\big),
\end{array}
$$ are respectively  called  the groups of {\em
symmetric $n$-cocycles} and {\em symmetric $n$-coboundaries} on $M$
with values in $\mathcal{A}$. By \cite[Theorems 1.3 and
2.12]{grillet2},  there are natural isomorphisms
$$
H^n(M,\mathcal{A})\cong  Z^n(M,\mathcal{A})/ B^n(M,\mathcal{A})$$ for $n=1,2,3$.

\vspace{0.2cm}

The elements of $H^1(M,\A)=Z^1(M,\A)$ are {\em derivations} of $M$
in $\A$, that is, functions $f:M\to \bigcup_{a\in M}\A_a$ with
$f(a)\in \A_a$, such that $f(ab)=a_*f(b)+b_*f(a)$.

\vspace{0.2cm} The elements of $H^2(M,\A)$ have a natural
interpretation in terms of {\em commutative group coextension} of
the commutative monoid $M$ by the functor $\A:\HH M\to \mathbf{Ab}$.
This is the classification result by Grillet in \cite[\S
V.4]{grillet4}, whose proof is a good illustration of the one we give of
our result in this paper. We shall not present Grillet's proof here but,
briefly, let us recall that in the correspondence between symmetric
2-cohomology classes and isomorphism classes of commutative group
coextensions, each symmetric $2$-cocycle $g\in Z^2(M,\A)$ is carried
to the coextension
$$E(M,\A,g)\to M,$$
where the {\em twisted crossed product} commutative monoid
$E(M,\A,g)$ is  the set $\bigcup_{a\in M}\A_a\times\{a\}$ of all
 pairs $(u_a,a)$ with $a\in M$ and $u_a\in \A_a$, with
multiplication defined by
$$(u_a,a)(u_b,b)=(a_*u_b+b_*u_a+g(a,b),ab).
$$
This multiplication is unitary ($(0,1)$ is the unit) since $g$ is
normalized, that is,  $g(a,1)=0=g(1,a)$; and it is associative and
commutative due to $g$ being a symmetric 2-cocycle, that is, because
of the equalities $a_*g(b,c)+g(a,bc)=g(ab,c)+c_*g(a,b)$ and
$g(a,b)=g(b,a)$. The homomorphism $E(M,\A,g)\to M$ is the
projection $(u_a,a)\mapsto a$, and, for each $a\in M$, the simply
transitive group action of the group $\A_a$ on the fiber set over $a$
 is given by $$u_a\cdot (v_a,a)=(u_a+v_a, a).$$

\subsection{Strictly symmetric monoidal abelian groupoids}\label{symmon}
In order to fix some needed notations about those monoidal
categories we intend to address, we start by recalling that a {\em
groupoid}  is a small category all of whose morphisms are invertible. A
groupoid $\E$ whose isotropy (or vertex) groups $\Aut_\E(x)$,
$x\in\Ob\E$, are all abelian is termed an {\em abelian groupoid}
(cf. \cite[Definition 2.11.3 and Example 2.11.4]{B-B}, where the notion of abelian groupoid is
discussed under a categorical point of view). We will use additive
notation for abelian groupoids, thus, the identity morphism of an
object $x$ of an abelian groupoid $\E$ will be denoted by $0_x$; if
$u:x\to y$, $v:y\to z$ are morphisms, their composite is written
$v+u:x\to z$, while the inverse of $u$ is $-u:y\to x$.

\begin{example}\label{ka1}Any abelian group $A$ can be regarded as an abelian
groupoid $\E$ with only one object, say $a$, and $\Aut_\E(a)=A$. For
many purposes it is convenient to distinguish $A$ from the
one-object groupoid $\E$; the notation $(K(A,1),a)$ for $\E$ is not
bad (its nerve or classifying space \cite[I, Example 1.4]{Go-Ja} is
precisely the pointed Eilenberg-Mac Lane minimal complex $K(A,1)$
with base-vertex $a$), and we shall use it below.

A groupoid in which there is no morphisms between different objects
is called {\em totally disconnected}. It is easily seen that any
 totally disconnected abelian groupoid is actually a disjoint union
of abelian groups, or, more precisely, of the form $\bigcup_{a\in
M}(K(\A_a,1),a)$, for some family of abelian groups $(\A_a)_{a\in
M}$.
\end{example}

A {\em strictly symmetric} (or {\em strictly commutative}) {\em
monoidal abelian groupoid}
$$\E=(\E, \otimes, \I,
\boldsymbol{a},\boldsymbol{r},\boldsymbol{c})$$ consists of an
abelian groupoid $\E$, a functor $ \otimes: \E\times\E\to\E$ (the
{\em tensor product}), an object $\I$ (the {\em unit object}), and
natural isomorphisms $\boldsymbol{a}_{x,y,z}:(x\otimes y)\otimes z
\to x\otimes(y\otimes z)$, $\boldsymbol{r}_{x} :x\otimes \I \to x$,
and $\boldsymbol{c}_{x,y}: x\otimes y \to y\otimes x$ (called the
{\em associativity, unit, and symmetry constraints}, respectively),
such that the following five conditions are satisfied.
\begin{flalign} \label{eq1.1} \bullet \hspace{0.2cm}
&\boldsymbol{a}_{{x,y,z\otimes t}}+\boldsymbol{a}_{{x\otimes
y,z,t}}=(0_x\!\otimes\! \boldsymbol{a}_{{y,z,t}})+
\boldsymbol{a}_{{x,y\otimes z,t}}+
(\boldsymbol{a}_{{x,y,z}}\!\otimes\! 0_t),&
\end{flalign}
$$\xymatrix@C=18pt@R=18pt{((x\otimes y)\otimes z)\otimes t
\ar[r]^{\boldsymbol{a}}\ar[d]_{ \boldsymbol{a}\otimes 0}&(x\otimes y)\otimes (z\otimes t)
\ar[r]^{\boldsymbol{a}}& x\otimes (y\otimes (z\otimes t))
\\ (x\otimes (y\otimes z))\otimes t\ar[rr]^{\boldsymbol{a}}&&x\otimes ((y\otimes z)\otimes t)
\ar[u]_{0\otimes \boldsymbol{a}}}
$$
\begin{flalign}\label{eq1.2} \bullet \hspace{0.2cm}
&(0_y\!\otimes\!\boldsymbol{c}_{x,z})+\boldsymbol{a}_{y,x,z}+
(\boldsymbol{c}_{x,y}\!\otimes\!
0_z)=\boldsymbol{a}_{y,z,x}+\boldsymbol{c}_{x,y\otimes
z}+\boldsymbol{a}_{x,y,z},&
\end{flalign}
$$
\xymatrix@C=14pt@R=10pt{ &(y\otimes x)\otimes z\ar[r]^{\boldsymbol{a}}&
y\otimes (x\otimes z)\ar[rd]^{0\otimes \boldsymbol{c}} & \\
(x\otimes y)\otimes z\ar[ru]^{\boldsymbol{c}\otimes 0}
\ar[rd]^{\boldsymbol{a}}&& &y\otimes (z\otimes x) \\
&x\otimes( y\otimes z)\ar[r]^{\boldsymbol{c}}&
(y\otimes z)\otimes x\ar[ru]^{\boldsymbol{a}}&
}
$$
\begin{flalign}\label{eq1.3}\bullet \hspace{0.2cm}
 &(0_x\otimes \boldsymbol{r}_y)+
(0_x\otimes \boldsymbol{c}_{\I,y})
+\boldsymbol{a}_{x,\I,y}=\boldsymbol{r}_x\otimes 0_y,&
\end{flalign}
$$
\xymatrix@R=18pt{(x\otimes\I)\otimes y\ar[r]^{\boldsymbol{a}}\ar[d]_{\boldsymbol{r}\otimes  0}
&x\otimes (\I\otimes y)\ar[d]^{0\otimes \boldsymbol{c}}
\\ x\otimes y&x\otimes (y\otimes  \I)\ar[l]_{0\otimes  \boldsymbol{r}}
}
$$
\begin{flalign}\label{eq1.5}\bullet \hspace{0.2cm} &\boldsymbol{c}_{y,x}+\boldsymbol{c}_{x,y}=0_{x\otimes
y},&
\end{flalign}
$$
\xymatrix@C=5pt@R=18pt{x\otimes y\ar[rr]^{\boldsymbol{c}}\ar@{=}[dr]&&
y\otimes x\ar[dl]^{ \boldsymbol{c}}\\&x\otimes y&
}
$$
\begin{flalign}\label{eq1.6}\bullet \hspace{0.2cm} &\boldsymbol{c}_{x,x}=0_{x\otimes x}: x\otimes x\to x\otimes x.&
\end{flalign}

For later reference, let us remark that the conditions above imply
the following \cite{Ke}
\begin{flalign}\label{eq1.4} \bullet \hspace{0.2cm}
&(0_x\otimes\boldsymbol{r}_y)+\boldsymbol{a}_{x,y,\I}=\boldsymbol{r}_{x\otimes
y},&
\end{flalign}
$$
\xymatrix@C=-5pt@R=18pt{(x\otimes y)\otimes \I\ar[rr]^{\boldsymbol{a}}\ar[dr]_{ \boldsymbol{r}}&&
x\otimes(y\otimes\I)\ar[dl]^{0\otimes \boldsymbol{r}}\\&x\otimes y&
}
$$

These axioms guarantee the coherence of the constraints in the
following sense (see Mac Lane \cite[Theorem 5.1]{Mac2} and
Fr\"{o}hlich and Wall \cite[Theorem (5.2)]{Fr-Wa}).

\begin{fact}[Coherence Theorem] Let $\E$ be a strictly symmetric monoidal abelian groupoid. Then,
commutativity holds in every diagram  in  $\E$ with vertices
iterated instances of the functors $x\mapsto x$, the identity, $*\mapsto \I$, which selects the unit object, $(x,y)\mapsto x\otimes y$,
$(x,y)\mapsto y\otimes x$, and $x\mapsto
x\otimes x$, the diagonal functor, and whose edges are expanded instances of
$\boldsymbol{a}$, $\boldsymbol{a}^{-1}$, $\boldsymbol{c}$,
$\boldsymbol{r}$, and $\boldsymbol{r}^{-1}$.
\end{fact}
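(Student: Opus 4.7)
My plan is to derive the coherence theorem from two classical results: Mac Lane's coherence theorem for (non-symmetric) monoidal categories and his coherence theorem for symmetric monoidal categories on disjoint letters. First, I would construct the free strictly symmetric monoidal abelian groupoid $\mathcal{F}$ on a countable set of generators $\{x_1,x_2,\ldots\}$: its objects are formal parenthesized words in the $x_i$, $\I$, $\otimes$, and the diagonal, and its morphisms are formal composites of the constraints $\boldsymbol{a}^{\pm 1}$, $\boldsymbol{r}^{\pm 1}$, $\boldsymbol{c}$ modulo the axioms (\ref{eq1.1})--(\ref{eq1.6}) together with (\ref{eq1.4}). Every diagram $D$ of the kind considered in the statement arises as the image of a corresponding diagram $\tilde D$ in $\mathcal{F}$ under the unique strictly symmetric monoidal functor $\mathcal{F}\to\E$ sending the chosen generators to the vertices of $D$, so it suffices to show commutativity of all such diagrams in $\mathcal{F}$.

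I would then peel off the associativity and unit constraints via Mac Lane's monoidal coherence theorem, which is a consequence of (\ref{eq1.1}), (\ref{eq1.3}) and (\ref{eq1.4}) alone: any two parallel morphisms in $\mathcal{F}$ built only from $\boldsymbol{a}^{\pm 1}$ and $\boldsymbol{r}^{\pm 1}$ are equal. Using this, every morphism in $\mathcal{F}$ between two given words can be put in a normal form consisting of a canonical reparenthesization onto a left-associated bracketing with all $\I$'s removed, followed by a composite of symmetries, followed by the inverse reparenthesization onto the target. The problem thus reduces to comparing two such composites of symmetries between two fixed orderings of a common list of letters.

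When all letters are distinct, this is exactly Mac Lane's symmetric coherence theorem: the composite depends only on the induced permutation. The new ingredient here is the diagonal functor $x\mapsto x\otimes x$, which can introduce repeated letters. My proposal is to lift: given a word with repeated letters, replace equal letters by formally distinct ones, apply Mac Lane's symmetric coherence to the lift, and then project back along the evident functor that re-identifies letters. Under this projection, any symmetry $\boldsymbol{c}_{x_i,x_j}$ whose arguments get identified becomes $\boldsymbol{c}_{x,x}=0_{x\otimes x}$ by (\ref{eq1.6}), while (\ref{eq1.5}) ensures that the two orientations of such an equal-letter swap agree. Consequently two lifts whose induced permutations differ only by transpositions among equal letters produce equal composites downstairs.

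The hard part is the bookkeeping in this last step: one must argue that \emph{every} discrepancy between two parallel composites of symmetries on a word with repeated letters is accounted for by a composition of equal-letter transpositions, so that (\ref{eq1.5})--(\ref{eq1.6}) together suffice to collapse it. This is precisely the strengthening of Mac Lane's symmetric coherence theorem carried out in \cite{Fr-Wa}, and a careful case analysis following that reference completes the argument.
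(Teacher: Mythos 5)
The paper does not prove this statement: it is imported as a ``Fact'' with citations to Mac Lane \cite[Theorem 5.1]{Mac2} and Fr\"ohlich--Wall \cite[Theorem (5.2)]{Fr-Wa}, so there is no in-paper argument to compare yours against. Your sketch is, in substance, the standard proof found in those sources, and its outline is sound: reduce to the free gadget, strip associativity and unit constraints by Mac Lane's monoidal coherence (pentagon \eqref{eq1.1} plus the derived triangle \eqref{eq1.4} suffice for the $\boldsymbol{a},\boldsymbol{r}$ fragment), and reduce the remaining comparison to composites of symmetries on a fixed multiset of letters, where the diagonal functor is what forces repetitions and where \eqref{eq1.5}--\eqref{eq1.6} must enter. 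Two small points. First, you should take the free strictly symmetric monoidal \emph{groupoid} (or category) on the generators rather than the free ``abelian'' one: abelianness of the isotropy groups plays no role in coherence and is in fact a consequence of it (hom-sets in the free object turn out to be at most singletons), so building it into the universal property risks a circularity. Second, the step you defer to \cite{Fr-Wa} can be closed directly, and it is worth recording how: two parallel composites of expanded symmetries $f,g:w\to w'$ lift to composites on distinct letters inducing permutations $\sigma_f,\sigma_g$ of positions, and $\sigma_g^{-1}\sigma_f$ lies in the stabilizer of the word $w$, i.e.\ in the Young subgroup permuting positions that carry equal letters. That subgroup is generated by transpositions $(i\,j)$ with $w_i=w_j=x$; writing $(i\,j)=\tau\,(k\ k{+}1)\,\tau^{-1}$ with $\tau$ bringing the two copies of $x$ adjacent, Mac Lane's coherence on distinct letters identifies the canonical symmetry of $(i\,j)$ with the conjugate by $s(\tau)$ of an expanded instance of $\boldsymbol{c}_{x,x}$, which is $0$ by \eqref{eq1.6}; hence the whole stabilizer acts trivially and $f=g$. (Condition \eqref{eq1.5} is already used in making the assignment $\sigma\mapsto s(\sigma)$ a homomorphism.) With that paragraph added, your proposal is a complete proof.
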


Below there is a convenient way to express this coherence in practice (see
Deligne \cite[1.4.1]{deligne} and Fr\"{o}hlich and Wall
\cite[Theorem (5.3)]{Fr-Wa}). Recall that, for any set $M$,  the free commutative monoid $\mathbb{N}[M]$ consists of commutative words
in $M$, which are unordered sequences $[a_1,\ldots,a_n]$ of
elements of $M$; unordered means that for any permutation $\sigma$,
$[a_{\sigma 1},\ldots,a_{\sigma n}]=[a_1,\ldots,a_n]$. Multiplication in $\mathbb{N}[M]$ is given by
by concatenation:
$$[a_1,\ldots,a_n][b_1,\ldots,b_m]=[a_1,\ldots,a_n,b_1,\ldots,b_m],$$
and the unit is $1=[~\,~]$, the empty word.

\begin{lemma}\label{fact1} Let $(x_a)_{a\in M}$ be any family
 of objects of a strictly symmetric monoidal abelian groupoid $\E$. If $\mathbb{N}[M]$ is the free commutative
 monoid generated by the index set $M$, then, there exists a map
 $F:\mathbb{N}[M]\to \Ob\E$ with $F[a]=x_a$, $a\in M$,  and
 isomorphisms $\varphi_{f,g}:Ff\otimes Fg\cong F(fg)$, $f,g\in
 \mathbb{N}[M]$, and $\varphi_0:\I\to F1$, satisfying the three equations below.

$\bullet$ \hspace{0.2cm} $ \varphi_{fg,h}+(\varphi_{f,g}\!\otimes\!
0_{Fh})=\varphi_{f,gh}+(0_{F\!f}\!\otimes\!
\varphi_{g,h})+\boldsymbol{a}_{F\!f,Fg,F\!h}$,
$$
\xymatrix@C=26pt@R=18pt{(Ff\otimes Fg)\otimes Fh\ar[r]^-{\varphi\otimes 0}
\ar[d]_{\boldsymbol{a}}&F(fg)\otimes Fh\ar[r]^-{\varphi}
&F(fgh)\ar@{=}[d]\\ Ff\otimes ( Fg\otimes Fh)\ar[r]^-{0\otimes \varphi}&
Ff\otimes F(gh)\ar[r]^-{\varphi}&F(fgh)}
$$

$\bullet$ \hspace{0.2cm} $\varphi_{g,f}
+\boldsymbol{c}_{F\!f,Fg}=\varphi_{f,g}$.
$$
\xymatrix@C=26pt@R=18pt{Ff\otimes Fg\ar[r]^-{\boldsymbol{c}}\ar[d]_{\varphi}
& Fg\otimes Ff\ar[d]^{\varphi}\\ F(fg)\ar@{=}[r]&F(gf)}
$$

$\bullet$ \hspace{0.2cm} $\varphi_{f,1}+ (0_{F\!f}\otimes
\varphi_0)= \boldsymbol{r}_{F\!f}$,
$$
\xymatrix@C=26pt@R=18pt{Ff\otimes \I\ar[r]^-{0\otimes \varphi_0}\ar[d]_{\boldsymbol{r}}
& Ff\otimes F1\ar[d]^{\varphi}\\ Ff\ar@{=}[r]&Ff}
$$
\end{lemma}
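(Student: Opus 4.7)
The plan is to invoke the Coherence Theorem systematically by first choosing a canonical representative in $\E$ for every word of $\mathbb{N}[M]$, and then defining each $\varphi_{f,g}$ as ``the'' canonical isomorphism between the two candidates given by the Coherence Theorem. To this end, I would begin by fixing a total ordering on the index set $M$. Every commutative word $f\in\mathbb{N}[M]$ then admits a unique ordered representative $[a_{i_1},a_{i_2},\ldots,a_{i_n}]$ with $i_1\leq i_2\leq\cdots\leq i_n$, and on it I would set
$$
Ff \,=\, (\,\cdots((x_{a_{i_1}}\otimes x_{a_{i_2}})\otimes x_{a_{i_3}})\otimes \cdots\, )\otimes x_{a_{i_n}},
$$
with $F[\,\,]=\I$ and $\varphi_0=0_{\I}$. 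Evidently $F[a]=x_a$ for each $a\in M$, as required.

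Next, for each pair of words $f,g\in\mathbb{N}[M]$, both $Ff\otimes Fg$ and $F(fg)$ are iterated tensor products of the very same multiset of objects $x_a$ (possibly together with $\I$-factors coming from the empty word), differing only in bracketing and in the order of their factors. By the Coherence Theorem stated in the Fact, between any two such objects there exists a unique morphism in $\E$ expressible as an iterated composite of expanded instances of $\boldsymbol{a}^{\pm 1}$, $\boldsymbol{c}$, and $\boldsymbol{r}^{\pm 1}$. I would \emph{define} $\varphi_{f,g}:Ff\otimes Fg\to F(fg)$ to be this unique canonical isomorphism; any explicit sequence of constraints joining source and target produces it.

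With these definitions in hand, each of the three required equations is an equality between two composites of canonical morphisms with the same source and target (after expanding the $\varphi$'s via the canonical recipes used to define them); more precisely, each diagram to be verified has vertices that are iterated tensor products of the $x_a$'s and $\I$, and edges that are expanded instances of $\boldsymbol{a}^{\pm 1}$, $\boldsymbol{c}$, $\boldsymbol{r}^{\pm 1}$. The Coherence Theorem therefore yields all three equations at once.

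The main obstacle is to check that the situation really does fall within the scope of the Coherence Theorem as formulated. Two points deserve care. First, the Coherence Theorem in the Fact explicitly admits the diagonal functor $x\mapsto x\otimes x$, and this is essential: if a word $f$ contains the same index twice (e.g. $f=[a,a]$), then the object $Ff$ involves $x_a\otimes x_a$, which can only be produced by the diagonal. Second, one must verify that the morphisms $\varphi_{f,g}$, together with the identity and the constraints, really do generate all morphisms needed in the three diagrams, so that no ``exotic'' edge arises to which coherence does not apply; this follows once one observes that each $\varphi_{f,g}$ is by construction an iterated composite of expanded instances of $\boldsymbol{a}^{\pm 1}$, $\boldsymbol{c}$, $\boldsymbol{r}^{\pm 1}$, and hence is itself an admissible edge in a coherence diagram.
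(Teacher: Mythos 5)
Your proposal is correct and follows essentially the same route as the paper's proof: fix a total order on $M$, define $Ff$ as the left-bracketed tensor product of the ordered factors with $F1=\I$ and $\varphi_0=0_\I$, take $\varphi_{f,g}$ to be a composite of expanded instances of $\boldsymbol{a}^{\pm1}$, $\boldsymbol{c}$, $\boldsymbol{r}^{\pm1}$, and let the Coherence Theorem force all three equations. Your extra remark that the diagonal functor must be admitted in the coherence statement (to handle words with repeated letters, which is exactly where the strictness $\boldsymbol{c}_{x,x}=0$ is used) is a valid and worthwhile clarification of a point the paper leaves implicit.
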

\begin{proof}
Let us now choose a total order for the index set $M$, so that any
$f\in \mathbb{N}[M]$ can be uniquely expressed as a sequence in
increasing order $$f=[a_1,\ldots,a_n], \hspace{0.4cm} a_1\leq
\cdots\leq a_n.$$ Then, we define $F:\mathbb{N}[M]\to \Ob\E$ by
putting $F1=\I$, $F[a]=x_a$,  and, recursively,
$$F[a_1,\ldots,a_n]=F[a_1,\ldots,a_{n-1}]\otimes x_{a_n}$$
for $n>1$. We have the identity isomorphism $\varphi_0=0_\I: \I\to
F1$ and, for any $f,g\in \mathbb{N}[M]$, it is clear that there is
an isomorphism
$$\varphi_{f,g}:Ff\otimes Fg\cong F(fg)$$
coming from instances of $\boldsymbol{a}$, $\boldsymbol{a}^{-1}$,
$\boldsymbol{c}$, and $\boldsymbol{r}$. It follows from the
Coherence Theorem above that these isomorphisms $\varphi_{f,g}$ so
obtained  satisfy all the requirements in the lemma.
\end{proof}

If $\E$, $\E'$ are strictly symmetric monoidal abelian groupoids,
then a {\em symmetric monoidal functor} $
F=(F,\varphi,\varphi_0):\E\to \E'$ consists of a functor on the
underlying groupoids $F:\E\to \E'$, natural isomorphisms $
\varphi_{x,y}:Fx\otimes  Fy\to F(x\otimes y)$, and an isomorphism
$\varphi_0:\I'\to F\I $, such that the following coherence
conditions hold.
\begin{flalign}\label{eq1.7}\bullet \hspace{0.2cm} &F\boldsymbol{a}_{x,y,z}+\varphi_{x\otimes
y,z}+(\varphi_{x,y}\!\otimes \! 0_{Fz})=\varphi_{x,y\otimes
z}+(0_{Fx}\!\otimes \! \varphi_{y,z})+\boldsymbol{a}'_{Fx,Fy,Fz},&
\end{flalign}
$$
\xymatrix@C=26pt@R=18pt{(Fx\otimes  Fy)\otimes  Fz\ar[r]^-{\varphi\otimes  0}
\ar[d]_{\boldsymbol{a}'}&F(x\otimes y)\otimes  Fz\ar[r]^-{\varphi}
&F((x\otimes y)\otimes z)\ar[d]^{F\boldsymbol{a}}\\ Fx\otimes  ( Fy\otimes  Fz)\ar[r]^-{0\otimes  \varphi}&
Fx\otimes  F(y\otimes z)\ar[r]^-{\varphi}&F(x\otimes (y\otimes z))}
$$
\begin{flalign}\label{eq1.8} \bullet \hspace{0.2cm} &F\boldsymbol{r}_x+\varphi_{x,\I}+(0_{Fx}\!\otimes \! \varphi_0)=
\boldsymbol{r}'_{Fx},&
\end{flalign}
$$
\xymatrix@C=26pt@R=18pt{Fx\otimes  \I'\ar[r]^-{0\otimes  \varphi_0}\ar[d]_{\boldsymbol{r}'}
& Fx\otimes  F\I\ar[d]^{\varphi}\\ Fx&F(x\otimes \I)\ar[l]_-{F\boldsymbol{r}}}
$$
\begin{flalign}\label{eq1.9}\bullet \hspace{0.2cm}  &\varphi_{y,x} +\boldsymbol{c}'_{Fx,Fy}=F\boldsymbol{c}_{x,y}+\varphi_{x,y}.&
\end{flalign}
$$
\xymatrix@C=26pt@R=18pt{Fx\otimes
Fy\ar[r]^-{\boldsymbol{c}'}\ar[d]_{\varphi} & Fy\otimes
Fx\ar[d]^{\varphi}\\ F(x\otimes
y)\ar[r]^{F\boldsymbol{c}}&F(y\otimes x)}
$$

Suppose $F':\E\to \E'$ is another symmetric monoidal functor. Then,
a {\em symmetric isomorphism} $\theta:F\Rightarrow F'$ is a natural
isomorphism on the underlying functors, $\theta:F\Rightarrow F'$,
such that the  coherence equations below are satisfied.
\begin{flalign}\nonumber\bullet \hspace{0.2cm}  &\theta_{x\otimes
y}+\varphi_{x,y}=\varphi'_{x,y}+(\theta_x\otimes \theta_y),
\hspace{0.3cm} \theta_\I+\varphi_0=\varphi'_0.&
\end{flalign}
$$
\xymatrix@C=26pt@R=18pt{Fx\otimes
Fy\ar[r]^-{\varphi}\ar[d]_{\theta\otimes \theta} & F(x\otimes
y)\ar[d]^{\theta}\\ F'x\otimes
F'y\ar[r]^{\varphi'}&F'(x\otimes y)}\hspace{0.45cm} \xymatrix@C=22pt@R=3pt{
&F\I\ar[dd]^{\theta}\\
\I'\ar[ru]^{\varphi_0}\ar[rd]_{\varphi'_0}&\\
&F'\I
}
$$

With compositions given in a natural way, strictly symmetric
monoidal abelian groupoids, symmetric monoidal functors, and
symmetric isomorphisms form a 2-category.
 A symmetric monoidal functor  $F:\E\to \E'$ is called a {\em
 symmetric monoidal
equivalence} if it is an equivalence in this 2-category, that is,
when there exists a symmetric monoidal functor $F':\E'\to \E$ and
symmetric isomorphisms $\theta: F'F\cong id_\E$ and
$\theta':FF'\cong id_{\E'}$.

 Our goal is to show a classification for strictly symmetric monoidal abelian groupoids,
 where two of them that are connected by a symmetric monoidal
 equivalence are considered the same. To do that, we will use the fact below by  Saavedra \cite[I,
4.4.5]{Saa}, where it is shown how to transport the symmetric monoidal
structure on an abelian groupoid along an equivalence on its
underlying groupoid. Recall that a functor between (non necessarily
abelian) groupoids $F:\E\to\E'$ is an equivalence (of categories) if
and only if the induced map on the sets of iso-classes of objects
$$
\Ob\E/_{\cong} \to \Ob\E'/_{\cong},\hspace{0.4cm} [x]\mapsto [Fx],
$$
is a bijection, and the induced homomorphisms on the automorphism
groups
$$
\Aut_\E(x)\to \Aut_{\E'}(Fx),\hspace{0.4cm} u\mapsto Fu
$$
are all isomorphisms \cite[Chapter 6, Corollary 2]{hig}.

\begin{fact}[Transport of Structure]\label{facttr} Let $F:\E\to \E'$ be an  equivalence
 between abelian groupoids, so
that there is a functor $F':\E'\to \E$ with natural equivalences
$\theta: id_\E\cong F'F$ and $\theta':FF'\cong id_{\E'}$ satisfying
$$
\theta'F+F\theta=id_F,\hspace{0.2cm} F'\theta'+\theta F'=id_{F'}.
$$

$(i)$ Any strictly symmetric monoidal structure on $\E$ can be
transported to one on $\E'$ such that the functors $F$ and $F'$
underlie symmetric monoidal functors, and the natural equivalences
$\theta$ and $\theta'$ turn to be symmetric isomorphisms.

\vspace{0.2cm}$(ii)$ If both $\E$ and $\E'$ have a strictly
symmetric monoidal structure, then any symmetric monoidal structure
on $F$ can be transported to one on $F'$ such that $\theta$ and
$\theta'$ become symmetric isomorphisms. Hence, a symmetric monoidal
functor is a symmetric monoidal equivalence if and only if the
underlying  functor is an equivalence.

\end{fact}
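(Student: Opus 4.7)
The plan is the standard transport-of-structure argument, executed in the additive notation of Subsection 2.2. For (i), I push the tensor product of $\E$ across $F'$; for (ii), I conjugate the monoidal structure of $F$ by $\theta$ and $\theta'$. In both parts, once the structure maps have been defined, everything reduces to coherence verification.

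For (i), set
$$
x'\otimes' y':=F(F'x'\otimes F'y'),\qquad \I':=F\I,
$$
and on morphisms put $u\otimes' v:=F(F'u\otimes F'v)$. Define the constraints on $\E'$ by conjugation: let $\boldsymbol{a}'$ be $F$ applied to $\boldsymbol{a}$ sandwiched between the appropriate instances of $\pm\theta$, let $\boldsymbol{r}'_{x'}:=\theta'_{x'}+F\boldsymbol{r}_{F'x'}+F(0\otimes -\theta_\I)$, and let $\boldsymbol{c}'_{x',y'}:=F\boldsymbol{c}_{F'x',F'y'}$. Equip $F$ with the data $\varphi_{x,y}:=F(-\theta_x\otimes -\theta_y)$, $\varphi_0:=0_{F\I}$, and equip $F'$ with $\varphi'_{x',y'}:=\theta_{F'x'\otimes F'y'}$, $\varphi'_0:=\theta_\I$. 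Each of (\ref{eq1.1})–(\ref{eq1.6}) on $\E'$, each of (\ref{eq1.7})–(\ref{eq1.9}) for $F$ and for $F'$, and the symmetry of $\theta$ and $\theta'$ then reduces, via naturality of $\theta$ and the triangle identities $\theta'F+F\theta=id_F$, $F'\theta'+\theta F'=id_{F'}$, to the corresponding axiom in $\E$.

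For (ii), given $(F,\varphi,\varphi_0)$, the monoidal structure on $F'$ is forced by demanding that $\theta$ be symmetric; one is led to
$$
\varphi'_{x',y'}:=F'(\theta'_{x'}\otimes\theta'_{y'})+F'(-\varphi_{F'x',F'y'})+\theta_{F'x'\otimes F'y'},
$$
with $\varphi'_0:=F'(-\varphi_0)+\theta_\I$. That (\ref{eq1.7})–(\ref{eq1.9}) hold for $F'$ and that $\theta,\theta'$ are symmetric is again a diagram chase splicing naturality of $\theta,\theta'$ with the axioms for $F$. The closing assertion of (ii) follows at once: a symmetric monoidal equivalence is obviously a categorical equivalence on underlying groupoids, and conversely, any symmetric monoidal $F$ whose underlying functor is an equivalence admits a quasi-inverse $F'$ that, by the construction just given, becomes a symmetric monoidal quasi-inverse of $F$. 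The only real obstacle throughout is bookkeeping: each coherence axiom unfolds into a polygon of four to seven edges whose commutativity reduces to an instance of a single axiom in $\E$ or for $F$ by interleaving naturality with the triangle identities, and the Coherence Theorem suppresses the combinatorial explosion caused by rechoosing parenthesizations at intermediate vertices.
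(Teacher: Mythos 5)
Your construction coincides with the paper's: the paper states this as a Fact cited from Saavedra and records exactly the same transported data ($x'\otimes y'=F(F'x'\otimes F'y')$, $\I'=F\I$, $\varphi_{x,y}=-F(\theta_x\otimes\theta_y)$, $\varphi'_{x',y'}=\theta_{F'x'\otimes F'y'}$, and in $(ii)$ the $\varphi'$ determined by conjugation with $\theta'$), characterizing the constraints implicitly via \eqref{eq1.7}--\eqref{eq1.9} where you give the equivalent explicit formulas. Your explicit $\varphi'_{x',y'}$ in part $(ii)$ does satisfy the paper's defining square (a short computation with naturality of $\theta'$ and the triangle identity $F\theta=-\theta' F$ confirms this), so the proposal is correct and follows essentially the same route.
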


Concerning  Fact \ref{facttr}$(i)$, let us point out that for
any strictly symmetric monoidal structure $(\E, \otimes, \I,
\boldsymbol{a},\boldsymbol{r},\boldsymbol{c})$ on the abelian
groupoid $\E$,  the structure $(\E', \otimes , \I',
\boldsymbol{a}',\boldsymbol{r}',\boldsymbol{c}')$ transported onto
the abelian groupoid $\E'$ by means of $(F,F',\theta,\theta')$  is
such that the monoidal product $\otimes$ is the dotted functor in
the commutative square
$$
\xymatrix@R=18pt{\E'\times\E'\ar@{.>}[r]^-{\otimes }\ar[d]_{F'\times F'}&\E'\\
\E\times\E\ar[r]^-{\otimes}&\E,\ar[u]_{F}
}$$
and the unit object is $F\I$. The functors $F$ and $F'$ are endowed
with the isomorphisms
\begin{equation}\label{eqvarf}\varphi_{x,y}=-F(\theta_x\otimes\theta_y):Fx\otimes
Fy
\to F(x\otimes y),\hspace{0.3cm} \varphi_0=0_{FI}:F\I\to F\I,\end{equation}
$$\varphi'_{x',y'}=\theta_{F'x'\otimes F'y'}:F'x'\otimes
F'y'
\to F'(x'\otimes  y'),\hspace{0.3cm} \varphi'_0=\theta_\I:\I\to F'\I',
$$
and then the constraints $\boldsymbol{a}',\boldsymbol{r}'$ and the
symmetry $\boldsymbol{c}'$ are given by those isomorphisms uniquely
determined by the equations $(\ref{eq1.7})$, $(\ref{eq1.8})$, and
$(\ref{eq1.9})$, respectively.

Concerning  Fact \ref{facttr}$(ii)$, let us recall that if both
$\E$ and $\E'$ are strictly symmetric monoidal abelian groupoids,
then for any symmetric monoidal structure $(F,\varphi,\varphi_0)$ on
$F$, the structure $(F',\varphi',\varphi'_0)$ transported on the
functor $F'$ is such that the isomorphisms
$$\varphi'_{x',y'}:F'x'\otimes F'y'\to F'(x'\otimes y'), \hspace{0.4cm}\varphi'_0:\I\to F'\I',$$ are the uniquely determined by the dotted
arrows making commutative the diagrams below.
$$
\xymatrix@C=35pt@R=18pt{FF'x'\otimes
FF'y'\ar[d]_-{\varphi}\ar[r]^-{\theta'_{x'}\otimes \theta'_{y'}} & x'\otimes
y'\ar[d]^{-\theta'_{x'\otimes y'}}\\ F(F'x'\otimes
F'y')\ar@{.>}[r]^-{F\varphi'}&FF'(x'\otimes y')}\hspace{0.55cm} \xymatrix@C=22pt@R=3.5pt{
&F\I\ar@{.>}[dd]^-{F\varphi'_0}\\
\I'\ar[ru]^{\varphi_0}\ar[rd]_-{-\theta'_{\I'}}&\\
&FF'\I'
}
$$

\section{The classification theorem} The framework of our discussion below comes suggested by the known
classification theorems for strictly commutative Picard categories
given in \cite{deligne}, \cite{Fr-Wa} and \cite{Sih2}, for
categorical groups and Picard categories in \cite{Sih}, for braided
categorical groups in \cite{Jo-St}, for graded categorical groups,
braided graded categorical groups, graded Picard categories and
strictly commutative graded Picard categories in
\cite{C-G-O,cegarra2,cegarra3}, for braided fibred categorical
groups, fibred Picard categories and strictly commutative fibred
Picard categories in \cite{C-C}, and for monoidal groupoids in
\cite{C-C-H}.

Let $M$ be a commutative monoid and let $\A:\HH M\to \mathbf{Ab}$ be
a functor.  Each symmetric 3-cocycle $h\in Z^3(M,\A)$ gives rise to
a strictly symmetric monoidal abelian groupoid
\begin{equation}\label{gc} \s(M,\A,h)
\end{equation}
which should be thought of a sort of  {\em $2$-dimensional twisted
crossed product of $M$ by $\A$}, and it is built as follows: Its
underlying groupoid is the totally disconnected groupoid
$$\xymatrix{\bigcup_{a\in M}(K(\A_a,1),a),}$$ where, recall from Example \ref{ka1},  each
$(K(\A_a,1),a)$ denotes the groupoid having $a$ as its unique object
and $\A_a$ as the automorphism group of $a$. Thus, an object of $
\s(M,\A,h)$ is an element $a\in M$; if $a\neq b$ are different
elements of the monoid $M$, then there is no morphisms in
$\s(M,\A,h)$ between them, whereas its isotropy group at any $a\in
M$ is $\A_a$.

The tensor functor $$\otimes:\s(M,\A,h)\times \s(M,\A,h)\to
\s(M,\A,h)$$ is given on objects by multiplication in $M$, so
$a\otimes b=ab$, and on morphisms by the group homomorphisms
$$
\otimes:\A_a\times \A_b\to \A_{ab}, \hspace{0.4cm} u_a\otimes u_b=b_* u_a+ a_*u_b.
$$
The unit object is $\I=1$, the unit element of the monoid
$M$, and the structure constraints and the symmetry isomorphisms are
\begin{align}\nonumber
\boldsymbol{a}_{a,b,c}=\ &h(a,b,c):(ab)c\to a(bc),\\ \nonumber
\boldsymbol{c}_{a,b}=\ &0_{ab}:ab\to ba, \\ \nonumber
\boldsymbol{r}_{a}=\ &0_a:a1\to a,\nonumber
\end{align}
which are easily seen to be natural since $\A$ is an abelian group
valued functor.  The coherence condition $(\ref{eq1.1})$ holds
thanks to the cocycle condition $\partial^3h=0$ in \eqref{eqg.3},
while $(\ref{eq1.2})$ easily follows from the cochain equations in
\eqref{eqg3c}. The normalization condition $h(a,1,b)=0$, easily deduced from being $h(a,b,1)=0$, implies the
coherence condition $(\ref{eq1.3})$, and those in \eqref{eq1.5} and
\eqref{eq1.6} are obviously verified.

In the next Theorem \ref{mainthcl},  we observe how any strictly
symmetric monoidal abelian groupoid is symmetric monoidal equivalent
to  such  a 2-dimensional crossed product. Previously, we combine
the transport process in Fact \ref{facttr} with the generalization
of Brandt's Theorem \cite{Brandt}, which asserts that every groupoid
is equivalent as a category to a totally disconnected groupoid
\cite[Chapter 6, Theorem 2]{hig}, to obtain the following.

\begin{lemma}\label{redtdis} Any strictly symmetric monoidal abelian groupoid is
symmetric monoidal equivalent to one which is totally disconnected
and whose symmetry and unit constraints are all identities.
\end{lemma}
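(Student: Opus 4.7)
The plan is to reduce $\E$ in two successive stages, each realized via Transport of Structure (Fact~\ref{facttr}): first to a totally disconnected abelian groupoid $\mathcal{T}$, and then further to one whose symmetry and unit constraints are all identities. For the first stage, I would write $M=\Ob\E/_\cong$ and pick a representative $x_a$ of each iso-class $a\in M$, with $x_1=\I$. Setting $\A_a=\Aut_\E(x_a)$, the generalized Brandt's theorem recalled in the paragraph preceding the lemma gives an equivalence of abelian groupoids $\E\simeq \mathcal{T}:=\bigcup_{a\in M}(K(\A_a,1),a)$. An application of Fact~\ref{facttr}$(i)$ then transports the strictly symmetric monoidal structure of $\E$ onto $\mathcal{T}$, producing a symmetric monoidal equivalence $\E\simeq\mathcal{T}$. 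Because $\mathcal{T}$ is totally disconnected, $M$ inherits a commutative monoid structure (with $ab=a\otimes b$ and $1=\I$), and the natural isomorphisms $\boldsymbol{a},\boldsymbol{r},\boldsymbol{c}$ become automorphisms $\boldsymbol{a}_{a,b,c}\in\A_{abc}$, $\boldsymbol{r}_a\in\A_a$, $\boldsymbol{c}_{a,b}\in\A_{ab}$.

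For the second stage, I would exhibit a new strictly symmetric monoidal structure on the same underlying groupoid $\mathcal{T}$ with the same tensor functor, say $(\mathcal{T},\otimes,\I,\boldsymbol{a}',0,0)$, together with a symmetric monoidal equivalence $(id_\mathcal{T},\varphi,\varphi_0)$ from $\mathcal{T}$ to this new structure. Condition~\eqref{eq1.9} requires $\varphi_{b,a}-\varphi_{a,b}=\boldsymbol{c}_{a,b}$; the antisymmetry $\boldsymbol{c}_{b,a}=-\boldsymbol{c}_{a,b}$ from~\eqref{eq1.5} and the strictness $\boldsymbol{c}_{a,a}=0$ from~\eqref{eq1.6} make this solvable by fixing any total order on $M$ with minimum~$1$ and setting $\varphi_{a,b}=0$ for $a\le b$, $\varphi_{a,b}=-\boldsymbol{c}_{a,b}$ for $a>b$. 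Condition~\eqref{eq1.8} then becomes $\varphi_{a,1}+a_*\varphi_0=-\boldsymbol{r}_a$, which should be solvable for a single suitable $\varphi_0\in\A_1$; the modified associator $\boldsymbol{a}'$ is uniquely forced by~\eqref{eq1.7}, and the composite $\E\simeq\mathcal{T}\simeq(\mathcal{T},\otimes,\I,\boldsymbol{a}',0,0)$ is the desired equivalence.

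The most delicate point is the mutual consistency of $\varphi$ and $\varphi_0$: one must verify that $\varphi_{a,1}$ as defined via the ordering matches the value forced by~\eqref{eq1.8} for a \emph{uniform} choice of $\varphi_0$ across all $a\in M$. This requires pushing the compatibilities encoded in axioms~$(\ref{eq1.1})$--$(\ref{eq1.6})$ through the Coherence Theorem; concretely, specializing~\eqref{eq1.4} at $y=\I$ relates $\boldsymbol{r}_a$, $a_*\boldsymbol{r}_1$ and $\boldsymbol{a}_{a,1,1}$, while~\eqref{eq1.3} links these quantities to $\boldsymbol{c}_{1,a}$, and together these identities make the choice $\varphi_0:=-\boldsymbol{r}_1$ work. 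Once this, along with the corresponding verification that the induced $\boldsymbol{a}'$ together with $\boldsymbol{r}'=0$ and $\boldsymbol{c}'=0$ obey~$(\ref{eq1.1})$--$(\ref{eq1.6})$, is complete, Fact~\ref{facttr}$(ii)$ concludes the proof.
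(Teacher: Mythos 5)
Your first stage (transporting onto the totally disconnected groupoid) is fine, but your second stage has a genuine gap at exactly the point you flag as delicate, and the proposed resolution does not work. Once $\varphi_{a,b}$ is fixed by the ordering (so $\varphi_{1,a}=0$ and $\varphi_{a,1}=-\boldsymbol{c}_{a,1}$), condition \eqref{eq1.8} with $\boldsymbol{r}'=0$ forces $\varphi_0=-\boldsymbol{r}_1$ (take $a=1$) and then demands, for every $a$, that $\boldsymbol{r}_a+\boldsymbol{c}_{\I,a}=a_*\boldsymbol{r}_\I$, i.e.\ that the left unit constraint $\boldsymbol{l}_a=\boldsymbol{r}_a+\boldsymbol{c}_{\I,a}$ be induced from $\boldsymbol{r}_\I$; by \eqref{eq1.3} at $x=\I$ this is equivalent to $\boldsymbol{a}_{\I,\I,a}=0$ for all $a$, which is \emph{not} a consequence of axioms \eqref{eq1.1}--\eqref{eq1.6} (your appeal to \eqref{eq1.4} and \eqref{eq1.3} leaves the terms $\boldsymbol{a}_{a,\I,\I}$ and $\boldsymbol{a}_{\I,\I,a}$ uncancelled). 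Concretely, take $M=\{1,a\}$ with $a^2=a$, $\A_1=0$, $\A_a=\mathbb{Z}/2$ with $a_*=id$, and twist $\s(M,\A,0)$ along the identity functor by the natural isomorphisms $\varphi_{1,a}=\varphi_{a,1}=1$, $\varphi_{1,1}=\varphi_{a,a}=0$, $\varphi_0=0$: the transported constraints satisfy \eqref{eq1.1}--\eqref{eq1.6} (a direct check, or general transport) and give $\boldsymbol{c}=0$, $\boldsymbol{r}_a=1\neq 0$, $\boldsymbol{a}_{1,1,a}=1$. Your recipe then yields $\varphi\equiv 0$ and $\varphi_0\in\A_1=0$, so \eqref{eq1.8} would require $1=0$ in $\mathbb{Z}/2$. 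The lemma of course still holds for this example (undo the twist), but your choices do not produce the equivalence. The repair is to decouple the unit from the ordering: set $\varphi_{a,\I}=-\boldsymbol{r}_a-a_*\varphi_0$ and $\varphi_{\I,a}=\boldsymbol{c}_{a,\I}+\varphi_{a,\I}$, using the ordering only for $a,b\neq\I$. Even then, you must justify that transporting along the identity with an \emph{arbitrary} natural $\varphi$ produces a valid strictly symmetric structure: Fact \ref{facttr} as stated prescribes $\varphi$ by \eqref{eqvarf}, so you need either Saavedra's more general transport or a direct verification of \eqref{eq1.1}--\eqref{eq1.6} for the induced $\boldsymbol{a}'$.

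For contrast, the paper sidesteps this cochain juggling entirely: it first replaces $\E$ by Deligne's model $\mathcal{N}$ built on the free commutative monoid $\mathbb{N}[M]$, where Lemma \ref{fact1} (a consequence of coherence) makes \emph{all} constraints literally identities, and only afterwards collapses to the totally disconnected groupoid, choosing the transporting isomorphisms carefully ($\theta_{x_a\otimes\I}=\boldsymbol{r}_{x_a}$) so that the transported $\boldsymbol{r}'$ and $\boldsymbol{c}'$ come out trivial with no further correction needed.
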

\begin{proof} Let $\E=(\E, \otimes, \I,
\boldsymbol{a},\boldsymbol{r},\boldsymbol{c})$ be any given strictly
symmetric monoidal abelian groupoid.

Let  $M=\Ob\E/_{\cong}$ be the set of isomorphism classes $[x]$ of
the objects of $\E$, and let us  choose, for each $a\in M$, any
representative object $x_a\in a$, with $x_{[\I]}=\I$.

In a first step, let us assume that all the symmetry constraints are
identities, that is, $x\otimes y=y\otimes x$ and
$\boldsymbol{c}_{x,y}=0_{x\otimes y}$, for any objects $x,y$ of
$\E$, and also that $\I\otimes\I=\I$ and $\boldsymbol{r}_\I=0_\I$,
the identity of the unit object. Then, let us form the totally
disconnected abelian groupoid
 $$\xymatrix{\E'=\bigcup_{a\in
M}(K(\A_a,1),a),}$$ whose set of objects is $M$, and whose isotropy group
at any object $a\in M$ is $\A_a=\Aut_\E(x_a)$.

This groupoid $\E'$ is  equivalent to the underlying groupoid $\E$.
To give a particular equivalence $F:\E\to \E'$, let us choose, for
each $a\in M$ and each $x\in a$, an isomorphism $\theta_x: x\cong
x_a$ in $\E$. In particular, for every $a\in M$, we take
  $\theta_{x_{\!a}\otimes \I}=\boldsymbol{r}_{x_{\!a}}$. Note that this selection implies that $\theta_\I=\theta_{\I\otimes\I}=\boldsymbol{r}_\I=0_\I$. Then, let
$F:\E\to \E'$ be the functor which acts on objects by $Fx=[x]$, and
on morphisms $u:x\to y$ by $Fu=\theta_y+u-\theta_x$. We have also
the more obvious functor $F':\E'\to \E$, which is defined on objects
by $F'a=x_a$, and on morphisms $u:a\to a$ by $F'u=u$. We have the
natural isomorphisms $\theta:id_\E\cong F'F$, and $\theta': FF'\cong
id_{\E'}$, where $\theta'_a=-\theta_{x_a}$, which clearly satisfy
the equalities $\theta'F+F\theta=id_F$ and $F'\theta'+\theta
F'=id_{F'}$.

Therefore, according to Fact \ref{facttr}, we can transport the
given symmetric monoidal structure of $\E$ to a corresponding one on
$\E'$ by means of $(F,F',\theta, id)$, so that we get a totally
disconnected strictly symmetric monoidal abelian groupoid $\E'=(\E',
\otimes , \I',
\boldsymbol{a}',\boldsymbol{r}',\boldsymbol{c}')$,
and a symmetric monoidal equivalence $F=(F,\varphi, \varphi_0):\E\to
\E'$. Now, a quick analysis of the structure on $\E'$ points out
that its unit object is $F\I=[\,\I\,]$ and that, for any object
$a\in \Ob\E'= M$,
\begin{align} \nonumber
\boldsymbol{r}'_a\ \overset{(\ref{eq1.8})}=\
&F(\boldsymbol{r}_{x_{\!a}})+\varphi_{{x_{\!a}},I}+
(0_a\otimes\varphi_0) \overset{\eqref{eqvarf}}=F(\boldsymbol{r}_{x_{\!a}})+\varphi_{{x_{\!a}},I}+(0_a\otimes 0_{[\I]})=
F(\boldsymbol{r}_{x_{\!a}})+\varphi_{{x_{\!a}},I}\\ \nonumber \overset{(\ref{eqvarf})}=\ &
\theta_{x_{\!a}}+\boldsymbol{r}_{x_{\!a}}-\theta_{x_{\!a}\otimes
\I}+\theta_{x_{\!a}\otimes \I}- (\theta_{x_{\!a}}\otimes
0_\I)-\theta_{x_{\!a}\otimes \I}\\
 \nonumber
 =\ &
\theta_{x_{\!a}}+\boldsymbol{r}_{x_{\!a}}- (\theta_{x_{\!a}}\otimes
0_\I)-\theta_{x_{\!a}\otimes \I} \overset{ \text{(naturality of $\boldsymbol{r}$)}}=
\boldsymbol{r}_{x_{\!a}}-\theta_{x_{\!a}\otimes \I}=0_{x_{\!a}}=0_a,
\end{align}
and, for any $a,b\in M$,
\begin{equation}\nonumber
\begin{split}
 \boldsymbol{c}'_{a,b} & \overset{\eqref{eq1.9}}{=} -\varphi_{x_b,x_a}+ F(\boldsymbol{c}_{x_a,x_b})
 +\varphi_{x_a,x_b}\overset{(\boldsymbol{c}=0)}=
  -\varphi_{x_b,x_a}+\varphi_{x_a,x_b}
 \\ & \overset{(\ref{eqvarf})}= \theta_{x_b\otimes x_a}-\theta_{x_b}\otimes \theta_{x_a}-\theta_{x_b\otimes x_a}+
\theta_{x_a\otimes x_b}+\theta_{x_a}\otimes\theta_{x_b}-\theta_{x_a\otimes x_b}
\\ &\text{(since $x_a\otimes x_b=x_b\otimes x_a$)}
\\ & =\theta_{x_b\otimes x_a}-\theta_{x_b}\otimes \theta_{x_a}
+\theta_{x_a}\otimes\theta_{x_b}-\theta_{x_a\otimes x_b}
\\ &\text{(since $\theta_{x_a}\otimes \theta_{x_b}=\theta_{x_b}\otimes \theta_{x_a}$, by the naturality
of  $\boldsymbol{c}_{x_a,x_b}=0_{x_a\otimes x_b}$)}
\\ &= \theta_{x_b\otimes x_a}-\theta_{x_a\otimes x_b} =  0_{x_a\otimes x_b}=0_{a b}.
  \end{split}
\end{equation}

Thus, $\E$ is symmetric monoidal equivalent to $\E'$, which  is a
totally disconnected strictly symmetric abelian groupoid whose unit
and symmetry constraints are all identities.

Hence, it suffices to prove now that the given strictly symmetric
monoidal
 abelian groupoid $\E$ is symmetric monoidal equivalent to another one whose
 symmetry constraints are all identities and whose unit constraint at the unit object is also the identity.
 Even more, following Deligne \cite{deligne}, we can prove that there is a symmetric
monoidal abelian groupoid $\mathcal{N}=(\mathcal{N},\bar{\otimes})$
whose constraints are all trivial (i.e., $\boldsymbol{a}=0$,
$\boldsymbol{c}=0$, and $\boldsymbol{r}=0$)  with a symmetric
monoidal equivalence $\mathcal{N}\simeq \E$:

Let $\mathbb{N}[M]$ be the free commutative monoid generated by $M$,
which we shall regard as a strictly symmetric monoidal discrete
groupoid (i.e., with only identities as morphisms). It follows from
Lemma \ref{fact1} that there is a symmetric monoidal functor
$$F=(F,\varphi,\varphi_0):\mathbb{N}[M]\to \E$$ such that
$F[a]=x_a$, for any $a\in M$. Then, we define $\mathcal{N}$ to be
the abelian groupoid whose set of objects is $\mathbb{N}[M]$, and
whose hom-sets are defined by
$$
\mathrm{Hom}_\mathcal{N}(f,g)=\mathrm{Hom}_\E(Ff,Fg).
$$
Composition in $\mathcal{N}$ is given by that in $\E$, so that we
have a full, faithful, and essentially surjective functor (i.e., an
equivalence)
$$
F:\mathcal{N}\to \E, \hspace{0.4cm} (f\overset{u}\to g)\mapsto (Ff\overset{u}\to Fg).
$$
The monoidal functor $\bar{\otimes}:\mathcal{N}\times \mathcal{N}\to
\mathcal{N}$ is defined by multiplication in $\mathbb{N}[M]$ on
objects, and on morphisms by
$$
(f\overset{u}\to g)\otimes (f'\overset{u'}\to g')= (ff'\overset{u\bar{\otimes} u'}\longrightarrow gg'),
$$
where ${u\bar{\otimes} u'}$ is the dotted morphism in the
commutative square in $\E$
\begin{equation}\label{bot}\begin{array}{l}
\xymatrix{Ff\otimes Ff'\ar[r]^{u\otimes u'}\ar[d]_{\varphi_{f,f'}}&Fg\otimes Fg'
\ar[d]^{\varphi_{g,g'}}\\ F(ff')\ar@{.>}[r]^{u\bar{\otimes} u'}&F(gg').
}
\end{array}
\end{equation}

So defined $\mathcal{N}=(\mathcal{N},\bar{\otimes})$ is a strictly
symmetric monoidal abelian groupoid with all the constraints being
identities. To prove this claim,  the following equalities on
morphisms in $\mathcal{N}$ should be verified
\begin{equation}\label{3ig}u\bar{\otimes}u'= u'\bar{\otimes}u,\hspace{0.25cm}
u\bar{\otimes}0_1=u, \hspace{0.25cm}
(u\bar{\otimes}u')\bar{\otimes}u''=u\bar{\otimes}(u'\bar{\otimes}u'').\end{equation}
But these follow from the naturality of the structure constraints of
$\E$, $\boldsymbol{c}$, $\boldsymbol{r}$, and $\boldsymbol{a}$,
respectively. For example, given any $u\in
\mathrm{Hom}_\mathcal{N}(f,g)$, we have the diagram
$$
\xymatrix@C=35pt{
Ff\otimes \I \ar@{}@<35pt>[d]|(.45){(A)}
\ar@/^2pc/[rr]^{\boldsymbol{r}_{Ff}}
\ar@{}@<15pt>[rr]|(.5){(C)}
\ar[r]^{0_{Ff}\otimes \varphi_0}\ar[d]_{u\otimes 0_\I}&Ff\otimes
F1\ar@{}@<40pt>[d]|(.45){(B)}\ar[r]^{\varphi_{f,1}}
\ar[d]^{u\otimes 0_{\!F1}}&Ff\ar[d]^u\\
Fg\otimes \I\ar[r]^{0_{Fg}\otimes \varphi_0}
\ar@{}@<-15pt>[rr]|(.5){(C)}
\ar@/_2pc/[rr]_{\boldsymbol{r}_{Fg}}
&Fg\otimes F1\ar[r]^{\varphi_{g,1}}&Fg }
$$
where the outside region commutes by naturality of $\boldsymbol{r}$,
those labelled with $(C)$ commute because
$(F,\varphi,\varphi_0):\mathbb{N}[M]\to \E$ is a symmetric monoidal
functor, and the square $(A)$ commutes due to $\otimes:\E\times
\E\to \E$ being a functor. It follows that the square $(B)$ is also
commutative and then that $u\bar{\otimes}0_1=u$. The other two
equations in \eqref{3ig} are proved similarly, and we leave them to
the reader.

Owing to the commutativity of squares \eqref{bot}, the isomorphisms
$\varphi_{f,f'}$ are natural  on morphisms of $\mathcal{N}$ and,
therefore, $F=(F,\varphi,\varphi_0):\mathcal{N}\to \E$ is actually a
symmetric monoidal functor, whence, by Fact \ref{facttr} $(ii)$, a
symmetric monoidal equivalence.
\end{proof}

We are now ready to prove the main result in this paper, namely

\begin{theorem}[Classification of Strictly Symmetric Monoidal Abelian Groupoids]\label{mainthcl}
 $(i)$ For any strictly symmetric monoidal abelian groupoid  $\E$, there
exist a commutative monoid $M$, a functor $\A:\HH M\to \mathbf{
Ab}$, a symmetric $3$-cocycle $h\in Z^3(M,\A)$, and a symmetric
monoidal equivalence
$$
 \s(M,\A,h) \simeq \E.
$$

$(ii)$ For any two commutative $3$-cocycles $h\in Z^3(M,\A)$ and
$h'\in  Z^3(M',\A')$, there is a symmetric monoidal equivalence
$$
 \s(M,\A,h) \simeq  \s(M',\A',h')
$$
 if and only if there exist an isomorphism of monoids $i:M\cong M'$ and a
natural isomorphism  $\psi:\A\cong \A'i$,  such that the equality of
cohomology classes below holds.
$$
[h]=\psi_*^{-1}i^*[h']\in H^3(M,\A)
$$
\end{theorem}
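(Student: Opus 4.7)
The strategy is to use Lemma~\ref{redtdis} to reduce part~(i) to an almost mechanical reading-off of data, and then derive part~(ii) from the coherence equations of a symmetric monoidal functor, matching them term-by-term against the symmetric cochain complex \eqref{sccomplex}.

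For part~(i), I would first invoke Lemma~\ref{redtdis} to replace $\E$ by an equivalent totally disconnected strictly symmetric monoidal abelian groupoid $\E_0$ with $\boldsymbol{c}=0$ and $\boldsymbol{r}=0$. Total disconnectedness forces each constraint isomorphism to be an \emph{automorphism}, so $(ab)c$ and $a(bc)$ agree as objects and the multiplication on $M:=\Ob\E_0$ becomes a genuine commutative monoid with unit $\I$. Taking $\A_a:=\Aut_{\E_0}(a)$ and $b_\ast u_a:=u_a\otimes 0_b$, the functoriality of $\otimes$ combined with the naturality of $\boldsymbol{a}$ at $(u_a,0_b,0_c)$ (inside the abelian group $\A_{abc}$) gives $(bc)_\ast=c_\ast b_\ast$, so $\A$ extends to a functor $\HH M\to\mathbf{Ab}$. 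Setting $h(a,b,c):=\boldsymbol{a}_{a,b,c}$, the pentagon \eqref{eq1.1} yields $\partial^3 h=0$, the hexagon \eqref{eq1.2} (under strict symmetry) plus re-labeling gives both symmetric cocycle relations in \eqref{eqg3c}, and the triangle \eqref{eq1.4} forces the normalization $h(a,b,1)=0$. By construction $\E_0=\s(M,\A,h)$, and composing with the equivalence from Lemma~\ref{redtdis} produces $\s(M,\A,h)\simeq\E$.

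For the forward direction of part~(ii), I would start from a symmetric monoidal equivalence $F=(F,\varphi,\varphi_0):\s(M,\A,h)\to\s(M',\A',h')$. Total disconnectedness of both sides forces $F$ to act on objects by a bijection $i:M\to M'$, and the existence of $\varphi_{a,b}\in\mathrm{Hom}(F(a)\otimes F(b),F(ab))$ makes $i$ a monoid homomorphism, hence an isomorphism. On vertex groups $F$ restricts to isomorphisms $\psi_a:\A_a\to\A'_{ia}$; naturality of $\varphi$ at $(u_a,0_b)$ translates to $\psi_{ab}\,b_\ast=(ib)_\ast\,\psi_a$, so $\psi$ is a natural isomorphism $\A\Rightarrow\A'i$. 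Writing \eqref{eq1.7} out, using $F\boldsymbol{a}=\psi_\ast h$ and $\boldsymbol{a}'=i^\ast h'$, one obtains
\[
\psi_\ast h(a,b,c)-(i^\ast h')(a,b,c)=a_\ast\varphi(b,c)-\varphi(ab,c)+\varphi(a,bc)-c_\ast\varphi(a,b)=(\partial^2\varphi)(a,b,c),
\]
while \eqref{eq1.9} with $\boldsymbol{c}=\boldsymbol{c}'=0$ forces $\varphi(a,b)=\varphi(b,a)$, so $\varphi$ is a symmetric $2$-cochain and $[h]=\psi_\ast^{-1}i^\ast[h']$ in $H^3(M,\A)$. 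For the reverse direction, given $i$, $\psi$ and a symmetric $g\in C^2(M,\A'i)$ with $\psi_\ast h-i^\ast h'=\partial^2 g$, I would construct $F$ directly by $F(a)=ia$, $F(u_a)=\psi_a u_a$, $\varphi_{a,b}=g(a,b)$, $\varphi_0=0$; the coherence conditions \eqref{eq1.7}--\eqref{eq1.9} then reduce, respectively, to the coboundary relation, the normalization of $g$, and the symmetry of $g$, while Fact~\ref{facttr}(ii) promotes $F$ to a symmetric monoidal equivalence since $i$ and every $\psi_a$ are bijections.

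The main bookkeeping obstacle will be extracting \emph{both} symmetric cocycle identities in \eqref{eqg3c} (not just an arbitrary $3$-cocycle) from the single hexagon \eqref{eq1.2} under the strictness axioms \eqref{eq1.5} and \eqref{eq1.6}. Concretely, \eqref{eq1.2} with $\boldsymbol{c}=0$ produces only $h(b,a,c)=h(b,c,a)+h(a,b,c)$; the reversal antisymmetry $h(c,b,a)+h(a,b,c)=0$ and the cyclic relation $h(a,b,c)+h(b,c,a)+h(c,a,b)=0$ must then be obtained by combining this with its permuted instances, a small combinatorial verification that is easy to mis-manage. An analogous subtlety on the coboundary side of part~(ii) is confirming that the witnessing $2$-cochain $\varphi$ is genuinely \emph{symmetric}: this is precisely what \eqref{eq1.9} under strict symmetry encodes, but it has to be extracted rather than assumed.
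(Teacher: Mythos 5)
Your proposal is correct and follows essentially the same route as the paper: reduce via Lemma~\ref{redtdis}, read off $(M,\A,h)$ from the tensor functor and the associativity constraint, derive the symmetric cochain and cocycle identities from \eqref{eq1.1}, \eqref{eq1.2} and \eqref{eq1.4} exactly as you describe, and match the coherence axioms \eqref{eq1.7}--\eqref{eq1.9} of a symmetric monoidal functor against the symmetric cochain complex for part~(ii). The only step the paper makes explicit that you elide is that, when extracting $g=\varphi$ from a \emph{given} equivalence, one must first normalize $F$ to be strictly unitary ($\varphi_0=0_1$, justified by citing \cite[Lemma 3.1]{cegarra3}), since otherwise \eqref{eq1.8} gives $g(a,1)=-(ia)_*\varphi_0$ rather than the normalization $g(a,1)=0$ required of a symmetric $2$-cochain.
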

\begin{proof} $(i)$ By Lemma \ref{redtdis}, we can suppose that $\E$ is totally disconnected
and that all its symmetry and unit constraints are identities. In
assuming that hypothesis, let us write the underlying groupoid as
$\E=\bigcup_{a\in M}(K(\A_a,1),a)$, where $M=\Ob\E$ and, for each
$a\in M$, $\A_a=\Aut_\E(a)$. Then, a system of data $(M,\A,h)$, such
that $\s(M,\A,h) = \E$ as symmetric monoidal abelian groupoids, is
defined as follows:

- {\em The monoid $M$}. The function on objects of the tensor
functor $\otimes:\E\times \E\to\E$ determines a multiplication on
$M$, simply by putting $ab=a\otimes b$, for any $a,b\in M$. If we
write $1\in M$ for the the unit object of $\E$, then this
multiplication on $M$ is unitary, since the unit is strict.
Furthermore,   it is associative and commutative since, being $\E$
totally disconnected, the existence of the associativity and
symmetry constraints $(ab)c\to a(bc)$ and  $ab\to ba$ forces the
equalities $(ab)c=a(bc)$ and $ab=ba$. Thus, $M$ becomes a
commutative monoid.

- {\em The functor $\A:\HH M\to \mathbf{Ab}$}. The group
homomorphisms $\otimes:\A_a\times \A_b\to \A_{ab}$ have an
associative, commutative, and unitary behaviour, in the sense that
the equalities
\begin{equation}\label{acubeha}
(u_a\otimes u_b)\otimes u_c = u_a\otimes( u_b\otimes u_c),\hspace{0.2cm} u_a\otimes u_b=u_b\otimes u_a,
\hspace{0.2cm} 0_1\otimes u_a=u_a,
\end{equation}
hold. These follow from the abelianess of the groups of
automorphisms in $\E$, since  the  diagrams below commute due to the
naturality of the structure constraints.
$$
\xymatrix@R=26pt{(ab)c\ar[d]_{(u_a\otimes u_b)\otimes u_c}
\ar[r]^{\boldsymbol{a}_{a,b,c}}&a(bc)\ar[d]^{u_a\otimes (u_b\otimes u_c)}\\
(ab)c\ar[r]^{\boldsymbol{a}_{a,b,c}}&a(bc)
}\hspace{0.2cm}
\xymatrix@R=27pt{ab\ar[d]_{u_a\otimes u_b}
\ar[r]^{0_{ab}}&ba\ar[d]^{u_b\otimes u_a}\\
ab\ar[r]^{0_{ab}}&ba
}\hspace{0.2cm}
\xymatrix@R=26pt{a1=a\ar[d]_{u_a\otimes 0_1}
\ar[r]^-{0_a}&a\ar[d]^{ u_a}\\
a1=a\ar[r]^-{0_a}&a
}
$$

Then, if write $b_*:\A_a\to \A_{ab}$ for the homomorphism such that
$$b_*u_a:=0_b\otimes u_a=u_a\otimes 0_b,$$ the equalities
$$
\begin{array}{l} (bc)_*(u_a)=0_{bc}\otimes u_a=(0_b\otimes 0_c)\otimes u_a\overset{(\ref{acubeha})}=
0_b\otimes (0_c\otimes u_a )=b_*(c_*u_a), \\
1_*u_a=0_1\otimes u_a\overset{(\ref{acubeha})}=u_a,
\end{array}
$$
show that the assignments $a\mapsto \A_a$, $(a,b)\mapsto b_*:\A_a\to
\A_{ab}$, define an abelian group valued functor on $\HH M$. Observe
that this functor determines the monoidal product $\otimes$ of $\E$,
since
\begin{align}\nonumber u_a\otimes u_b\ =\ & (u_a+0_a)\otimes (0_b+
u_b) = (u_a\otimes 0_b)+ (0_a\otimes
u_b)\overset{(\ref{acubeha})}=(0_b\otimes u_a)+ (0_a\otimes
u_b)\\ =\ & b_*u_a+a_*u_b.
\nonumber
\end{align}

- {\em The symmetric $3$-cocycle $h\in  Z^3(M,\A)$}.  The
associativity constraints of $\E$ are necessarily written in the
form $\boldsymbol{a}_{a,b,c}=h(a,b,c)$, for some list
$\big(h(a,b,c)\in \A_{abc}\big)_{\!^{a,b,c\in M}}$. Since the
symmetry constraints are all identities, for any $(a,b,c)\in M^3$,
equation \eqref{eq1.2} gives
\begin{equation}\label{eau1}
h(a,b,c)+h(b,c,a)=h(b,a,c),
\end{equation}
which, making the permutation $(a,b,c)\leftrightarrow (b,a,c)$, is
written as $ h(b,a,c)+h(a,c,b)=h(a,b,c)$. If we carry this to
\eqref{eau1}, we obtain  $h(b,a,c)+h(a,c,b)+h(b,c,a)=h(b,a,c)$,
whence it follows the first symmetric cochain condition in
\eqref{eqg3c}, that is,
$$h(a,c,b)+h(b,c,a)=0.$$
To get the second one, we replace the term $h(b,a,c)$ with
$-h(c,a,b)$ in \eqref{eau1} and we have $$
h(a,b,c)+h(b,c,a)+h(c,a,b)=0,$$ as it is required.  Now, from
\eqref{eq1.4} it follows that
$h(a,b,1)=0$. Hence $h\in C^3(M,\A)$ is a
symmetric 3-cochain. Finally, the coherence condition in
\eqref{eq1.1} gives the equations
$$a_*h(b,c,d)+h(a,bc,d)+d_*h(a,b,c)=h(a,b,cd)+h(ab,c,d),$$
which means that $\partial^3h=0$ in \eqref{eqg.3}, so that $h\in
Z^3(M,\A)$ is  a symmetric 3-cocycle.

Since an easy comparison shows that $\E= \s(M,\A,h)$, the proof of
this part is complete.

\vspace{0.2cm} $(ii)$ We first assume that there exist an
isomorphism of monoids $i:M\cong M'$ and a natural isomorphism
$\psi:\A\cong \A'i$,  such that $ \psi_*[h]=i^*[h']\in H^3(M,\A'i)
$. This  means that there is a symmetric 2-cochain $g\in
C^2(M,\A'i)$ such that the equalities below hold.
\begin{align} \label{qa1}
  \psi_{abc} h(a,b,c)=& h'(ia,ib,ic)+(ia)_*g(b,c)-g(ab,c)+g(a,bc)-(ic)_*g(a,b)
 \end{align}
Then, we have a symmetric monoidal isomorphism
\begin{equation}\label{sf}\s(i,\psi,g)=(F,\varphi,\varphi_0): \s(M,\A,h)\to
 \s(M',\A',h'),\end{equation} whose underlying functor acts by
$$F(a\overset{u}\to a)=(ia\overset{\psi_au_a}\longrightarrow ia),$$
and whose structure isomorphisms are given by
$$
\begin{array}{l} \varphi_{a,b}=g(a,b):(ia)\, (ib) \to i(ab),\\[4pt]
\varphi_0=0_1:1\to i1=1.
\end{array}
$$
In effect, so defined, it is easy to see that $F$ is an isomorphism
between the underlying  groupoids. Verifying the naturality of the
isomorphisms $\varphi_{a,b}$, that is, the commutativity of the
squares
\begin{equation}\label{natfi}\begin{array}{l}
\xymatrix{(ia)(ib)\ar[r]^{\varphi_{a,b}}\ar[d]_{(ia)_*\psi_{b}(u_b)+
(ib)_*\psi_{a}(u_a)}
&i(ab)\ar[d]^{\psi_{ab}(a_*u_b+b_*u_a)}\\ (ia)(ib)\ar[r]^{\varphi_{a,b}}&i(ab),
}\end{array}
\end{equation}
for  $u_a\in \A_a$, $u_b\in\A_b$, is equivalent (since the groups
$\A'_{i(ab)}$ are abelian) to verify the equalities
\begin{equation}\label{natfi2}\psi_{ab}(a_*u_b+b_*u_a)=(ia)_*\psi_b(u_b)+(ib)_*\psi_a(u_a),
\end{equation}
which hold since the naturality of $\psi:\A\cong \A' i$ just says
that
\begin{equation}\label{ea3}\psi_{ab}(a_*u_b)=(ia)_*\psi_b(u_b).\end{equation}
 The coherence
condition $(\ref{eq1.7})$ is verified as follows
\begin{align} \label{aeq5}
   &\varphi_{a,b\otimes c}+(0_{Fa}\!\otimes \! \varphi_{b,c})+\boldsymbol{a}'_{Fa,Fb,Fc}=
   \varphi_{a,bc}+ (ia)_*\varphi_{b,c}+h'(ia,ib,ic) \\ \nonumber
   &\hspace{0.3cm}=g(a,bc)+ (ia)_*g(b,c)+h'(ia,ib,ic)
\overset{(\ref{qa1})}= \psi_{abc}h(a,b,c)+g(ab,c)+(ic)_*g(a,b)\\[4pt] \nonumber
&\hspace{0.3cm}=\psi_{abc}h(a,b,c)+\varphi_{ab,c}+i(c)_*\varphi_{a,b}
   = F(\boldsymbol{a}_{a,b,c})+\varphi_{a\otimes
b,c}+(\varphi_{a,b}\!\otimes \! 0_{Fc}),
\end{align}
whilst the conditions in  $(\ref{eq1.8})$  and $(\ref{eq1.9})$
trivially follow from the symmetric cochain conditions
$g(a,1)=0_{ia}$ and  $g(a,b)=g(b,a)$, respectively.

\vspace{0.2cm} Conversely, suppose that
$$F=(F,\varphi,\varphi_0): \s(M,\A,h) \to  \s(M',\A',h')$$ is
any symmetric monoidal equivalence. By \cite[Lemma 3.1]{cegarra3},
there is no loss of generality in assuming that $F$ is strictly
unitary in the sense that $\varphi_0=0_1:1\to 1=F1$.

As the underlying  functor establishes an equivalence between the
underlying  groupoids, $$\xymatrix{F:\bigcup_{a\in
M}(K(\A_a,1),a)\simeq \bigcup_{a'\in M'}(K(\A'_{a'},1),a'),}$$ and
these are totally disconnected, it is necessarily an isomorphism.
Let us write $i:M\cong M'$ for the bijection describing the action
of $F$ on objects; that is, such that $ia=Fa$, for each $a\in M$.
Then, $i$ is actually an isomorphism of monoids, since the existence
of the structure isomorphisms $\varphi_{a,b}: (ia)(ib)\to i(ab)$
forces the equality $(ia)(ib)=i(ab)$.

Let us write $\psi_a:\A_a \cong \A'_{ia}$ for the isomorphism giving
the action of $F$ on automorphisms $u_a:a\to a$, that is, such that
$\psi_a u_a=Fu_a$, for each $u_a\in \A_a$, and  $a\in M$. The
naturality of the automorphisms $\varphi_{a,b}$ tell us that the
equalities $(\ref{natfi2})$ hold (see diagram $(\ref{natfi})$).
These, for the case when $u_a=0_a$, give the equalities in
$(\ref{ea3})$,  which amounts to saying that $\psi:\A\cong \A'i$ is
a natural isomorphism of abelian group valued functors on $\HH M$.

Writing now $g(a,b)=\varphi_{a,b}$, for each $a,b\in M$,  the
equations $g(a,1)=0_{ia}$ and $g(a,b)=g(b,a)$  hold just due to the
coherence equations $(\ref{eq1.8})$  and $(\ref{eq1.9})$, and thus
we have a symmetric 2-cochain $g=\big(g(a,b)\in
\A'_{i(ab)}\big)_{^{a,b\in M}}$, which satisfies the equations
$(\ref{qa1})$ owing to the coherence equations $(\ref{eq1.7})$, as
we can see just by retracting our steps in $(\ref{aeq5})$. This
means that $\psi_*(h) = i^*(h')+\partial^2(g)$ and, therefore, we
have that $\psi_*[h] = i^*[h']\in  H^3(M,\A' i)$, whence  $[h] =
\psi_*^{-1}i^*[h']\in H^3(M,\A)$, as required.
\end{proof}

\begin{remark}\label{r11} Let $$\text{\bf Symmetric 3-cocycles}$$ denote the {\em category
of $3$-cocycles of commutative monoids}. That is, the category whose
objects are triples $(M,\A,h)$ with $M$ a commutative monoid,
$\A:\HH M\to \mathbf{Ab}$ a functor, and $h\in Z^3(M,\A)$ a
symmetric 3-cocycle, and whose arrows $$ (i,\psi,[g]):(M,\A,h) \to
(M',\A',h') $$ are triples consisting of a monoid homomorphism
$i:M\to M'$, a natural transformation $\psi:\A\to \A'i$, and the
equivalence class $[g]$ of a symmetric 2-cochain $g\in C^2(M,\A'i)$
such that $\psi_*(h)=i^*(h')+\partial^2(g)$ (i.e., equation
\eqref{qa1} holds). Two such cochains $g,g'\in C^2(M,\A'i)$ are
equivalent if there is a symmetric 1-cochain $f\in C^1(M,\A'i)$ such
that $g=g'+\partial^1(f)$. Composition in this category of
3-cocycles is defined in a natural way: The composite of
$(i,\psi,[g])$ with $ (i',\psi',[g']):(M',\A',h') \to
(M'',\A'',h'')$ is the arrow
$$
(i'i,\psi'i\ \psi, [(\psi'i)_*(g)+i^*(g')]):(M,\A,h) \to
(M'',\A'',h''),
$$
where $i'i:M\to M''$ is the composite homomorphism of $i'$ and $i$,
$\psi'i\ \psi:\A\to \A''i'i$ is the natural transformation such that
$(\psi'i\ \psi)_a= \psi'_{ia} \psi_a$, the composite
homomorphism of $\psi'_{ia}:\A'_{ia}\to \A''_{i'ia}$ with
$\psi_a:\A_a\to \A'_{ia}$, for each $a\in M$, and
$(\psi'i)_*(g)+i^*(g')\in C^2(M,\A''i'i)$ is the symmetric 2-cochain
given by
$$
((\psi'i)_*(g)+i^*(g'))(a,b)=\psi'_{i(ab)}g(a,b)+g'(ia,ib).
$$
The identity arrow of any object $(M,\A,h)$ is the triple
$(id_M,id_{\A},[0])$.

With a slight adaptation of the arguments in the proof of part (ii),
Theorem 3.1 can be formulated as an equivalence of categories
$$
\text{\bf Symmetric 3-cocycles} \simeq \text{\bf Strictly symmetric monoidal abelian groupoids}
$$
between the category of symmetric 3-cocycles and the category of
strictly symmetric monoidal abelian groupoids, $\E$, with
iso-classes, $[F]:\E\to \E'$, of symmetric monoidal functors,
$F:\E\to \E'$, as arrows. The equivalence of categories is given by
the constructions
 \eqref{gc} on objects and \eqref{sf} on morphisms, that is,
$$
\xymatrix@C=40pt{\big((M,\A,h)\ar[r]^{(i,\psi,[g])}&(M',\A',h')\big)}\mapsto
\xymatrix@C=40pt{\big(\s(M,\A,h)\ar[r]^{[\s(i,\psi,g)]}&\s(M',\A',h')\big).}
$$
\end{remark}

A {\em strictly commutative Picard category} \cite[Definition
1.4.2]{deligne} is a strictly symmetric monoidal abelian groupoid
$\mathcal{P}=(\mathcal{P}, \otimes, \I,
\boldsymbol{a},\boldsymbol{r},\boldsymbol{c})$ in which, for any
object $x$, there is an object $x^*$ with an arrow $x\otimes x^*\to
\I$. Actually, the hypothesis of being abelian is superfluous here
since a monoidal groupoid in which every object has a quasi-inverse
is always abelian \cite[Proposition 4.1 (ii)]{C-C-H}. Next, we
obtain Deligne's classification result for these Picard categories
as a corollary of Theorem \ref{mainthcl} and the lemma below by Mac
Lane \cite[Theorem 4]{MacL}.

\begin{lemma}\label{MacLane} Let $G$ be any abelian group. For any abelian group $A$, regarded as a constant functor $A:\HH G\to \mathbf{Ab}$, the symmetric $3$-dimensional cohomology group of $G$ with coefficients in $A$ is zero, that is, $H^3(G,A)=0$.
\end{lemma}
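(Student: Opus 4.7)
The goal is to show that every symmetric 3-cocycle $h\in Z^3(G,A)$ is a symmetric 2-coboundary. My plan is to construct an explicit symmetric 2-cochain $g\in C^2(G,A)$ with $\partial^2 g=h$, exploiting the two extra structural features of this setting: the coefficient functor is constant (so every $a_\ast:A\to A$ is the identity) and $G$ is a group rather than merely a monoid (so inverses are available).

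I would first reduce \eqref{eqg.3} to the simpler identity
$$h(b,c,d)-h(ab,c,d)+h(a,bc,d)-h(a,b,cd)+h(a,b,c)=0,$$
and the coboundary formula to $(\partial^2 g)(a,b,c)=g(b,c)-g(ab,c)+g(a,bc)-g(a,b)$. Using the normalization $h(a,b,1)=0$ together with \eqref{eqg3c}, one quickly checks that $h$ vanishes as soon as any of its three arguments equals the identity element of $G$; this will feed directly into the required normalization of $g$.

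The core construction follows Mac Lane's argument in \cite[Theorem 4]{MacL}, which uses the inverse map $a\mapsto a^{-1}$ of $G$ to produce an explicit contracting homotopy on Grillet's complex. Concretely, one defines $g(a,b)$ as an appropriate combination of values $h(\cdot,\cdot,\cdot)$ evaluated on triples built from $a$, $b$, and their inverses, arranged so that the cyclic and antisymmetric identities of \eqref{eqg3c} automatically force the symmetry $g(a,b)=g(b,a)$, while the vanishing of $h$ on identity-containing triples yields $g(a,1)=0$. Checking that $\partial^2 g = h$ is then a mechanical computation in which the simplified 3-cocycle identity is applied repeatedly to collapse telescoping terms.

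The chief obstacle is calibrating the formula for $g$ so that symmetry, normalization, and the correct coboundary all hold simultaneously. The strictness reflected in the antisymmetric condition $h(c,b,a)+h(a,b,c)=0$ of \eqref{eqg3c} is exactly what makes these three requirements mutually compatible; if one only imposed the commutative (but not strict) conditions on $h$, nontrivial obstructions — essentially quadratic maps $G\to A$ arising from non-identity self-symmetries $\boldsymbol{c}_{x,x}$ — would survive in $H^3(G,A)$.
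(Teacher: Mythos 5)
There is no in-paper argument to compare against here: the paper does not prove this lemma, but quotes it from Mac Lane's 1950 ICM address \cite[Theorem 4]{MacL}. Judged on its own terms, your proposal has a genuine gap at its centre. The preliminary observations you actually verify are correct and easy --- with constant coefficients the coboundary reduces to $(\partial^2g)(a,b,c)=g(b,c)-g(ab,c)+g(a,bc)-g(a,b)$, and the conditions \eqref{eqg3c} together with $h(a,b,1)=0$ do force $h$ to vanish whenever one of its arguments is $1$ --- but the proof itself, namely an explicit symmetric normalized $g$ with $\partial^2g=h$, is never exhibited. You describe $g(a,b)$ only as ``an appropriate combination of values of $h$ on triples built from $a$, $b$ and their inverses,'' you identify ``calibrating the formula for $g$'' as the chief obstacle, and you then do not calibrate it. That calibration \emph{is} the lemma; everything else in your write-up is routine.

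Moreover, the framing via ``an explicit contracting homotopy on Grillet's complex'' cannot be right as stated. A contracting homotopy of the complex would kill cohomology in every degree, whereas $H^2(G,A)\cong\mathrm{Ext}_{\mathbb{Z}}(G,A)$ is nonzero in general (it classifies abelian extensions of $G$ by $A$, as the paper recalls). Likewise, mere availability of inverses does not force vanishing: the ordinary bar-resolution $H^3$ of a group is nonzero in general. So whatever formula for $g$ one writes down must exploit the degree-$3$ cocycle identity together with \emph{both} conditions in \eqref{eqg3c} in an essential, non-formal way, and producing and verifying such a formula is a genuine computation (it is the content of Mac Lane's Theorem 4), not a mechanical telescoping. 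Your closing remark about quadratic maps surviving in the non-strict case correctly identifies \emph{why} the strictness hypothesis matters, but it does not substitute for the missing construction. To complete the argument you must either write down the candidate $g$ explicitly and check $\partial^2 g=h$, or simply cite Mac Lane as the paper does.
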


For any abelian groups $G$ and $A$, let $\s(G,A,0)$ be the strictly
symmetric monoidal abelian groupoid  built as in $\eqref{gc}$, for
the constant functor $A:\HH G\to \mathbf{Ab}$ and the zero 3-cocycle
$0:G^3\to A$.
 Since $G$ is a group, $\s(G,A,0)$ is actually  a
strictly commutative Picard category. Then, we have

\begin{corollary}[Deligne \cite{deligne}, Fr\"{o}hlich-Wall \cite{Fr-Wa}, Sinh \cite{Sih2}]\label{clbcg}

$(i)$ For any strictly commutative Picard category $\mathcal{P}$,
there exist abelian groups $G$ and $A$ and a symmetric monoidal
equivalence
$$
\s(G,A,0) \simeq \mathcal{P}.
$$

$(ii)$ For any abelian groups  $G,G',A$ and $A'$, there is a
symmetric monoidal equivalence
$$
\s(G,A,0)\simeq \s(G',A',0)
$$
 if and and
only if there are isomorphisms $G\cong G'$ and $A\cong A'$.
\end{corollary}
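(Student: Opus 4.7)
The plan for part $(i)$ is to invoke Theorem~\ref{mainthcl}$(i)$ to obtain a triple $(M,\A,h)$ with $\s(M,\A,h)\simeq\mathcal{P}$, and then to normalize this triple to the form $(G,A,0)$ by successively exploiting the Picard hypothesis and Mac Lane's vanishing result. Since symmetric monoidal equivalences preserve the existence of quasi-inverses, every object $a$ of $\s(M,\A,h)$ admits some $b$ with $ab=a\otimes b\cong \I=1$; the underlying groupoid being totally disconnected then forces $ab=1$, so $M$ is a group, and commutativity makes $M=G$ abelian.

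Next I would reduce $\A$ to a constant functor by observing that when $G$ is a group the category $\HH G$ is a connected groupoid: the morphism $(a,b):a\to ab$ has inverse $(ab,b^{-1})$, and the morphisms $(1,a):1\to a$ connect every object to $1$. Setting $A:=\A_1$ and $\psi_a:=\A(1,a):A\to \A_a$, functoriality yields both that each $\psi_a$ is an isomorphism and that $\psi$ is a natural isomorphism between $\A$ and the constant functor with value $A$ (naturality with respect to $(a,b):a\to ab$ amounts to the identity $b_*\circ a_*=(ab)_*$). With $\psi$ in hand, Theorem~\ref{mainthcl}$(ii)$ applied with $i=\mathrm{id}_G$ yields a symmetric monoidal equivalence $\s(G,\A,h)\simeq \s(G,A,0)$ provided $[h]=\psi_*^{-1}[0]=0$ in $H^3(G,\A)$. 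But $\psi$ induces an isomorphism $\psi_*:H^3(G,\A)\cong H^3(G,A)$, and Lemma~\ref{MacLane} gives $H^3(G,A)=0$; therefore $[h]=0$ and part $(i)$ is complete.

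For part $(ii)$, both implications follow from Theorem~\ref{mainthcl}$(ii)$. The ``if'' direction is immediate: isomorphisms $i:G\cong G'$ and $A\cong A'$ assemble into a natural isomorphism $\psi$ of the corresponding constant functors, and the zero $3$-cocycle evidently pulls back to the zero $3$-cocycle, so the cohomological condition of that theorem is trivially satisfied. Conversely, any symmetric monoidal equivalence $\s(G,A,0)\simeq \s(G',A',0)$ supplies by the same theorem a monoid isomorphism $i:G\cong G'$ and a natural isomorphism $\psi:A\cong A'\circ i$ of functors on $\HH G$; since both functors involved are constant, $\psi$ reduces to an abelian group isomorphism $A\cong A'$. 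No substantive obstacle is anticipated: the entire conceptual content lies in the reduction of $\A$ to a constant functor via the connectedness of $\HH G$, after which Mac Lane's vanishing theorem does all the remaining work.
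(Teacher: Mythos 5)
Your proposal is correct and follows essentially the same route as the paper: reduce to $\s(M,\A,h)$ via Theorem \ref{mainthcl}$(i)$, use total disconnectedness to see $M=G$ is an abelian group, trivialize $\A$ via the natural isomorphism with components $a_*:\A_1\to\A_a$ (exactly the paper's $\phi$), and kill $[h]$ by Lemma \ref{MacLane} before applying Theorem \ref{mainthcl}$(ii)$; part $(ii)$ is likewise handled identically.
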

\begin{proof}
$(i)$ Let $\mathcal{P}$ be a strictly commutative Picard category.
By Theorem \ref{mainthcl}, there are a commutative monoid $M$, a
functor $\A:\HH M\to \mathbf{ Ab}$,  a 3-cocycle $h\in Z^3(M,\A)$,
and a symmetric monoidal equivalence $ \s(M,\A,h) \simeq
\mathcal{P}$.

Then, $\s(M,\A,h)$ is a strictly commutative Picard category as
$\mathcal{P}$ is and, therefore, for any $a\in M$, it must exist
another $a^*\in M$ with a morphism $a\otimes a^*=aa^*\to\I= 1$ in
$\s(M,\A,h)$. Since the groupoid $\s(M,\A,h)$ is totally
disconnected, it must be
 $aa^*=1$ in $M$, which  means that $a^*=a^{-1}$ is an inverse of $a$ in $M$. Therefore,
$M=G$ is actually an abelian group.

Let $A=A_1$ be the abelian group attached by $\A$ at the unit of
$G$. Then, a natural isomorphism $\phi:A\cong \A$ is defined such
that, for any $a\in G$,  $\phi_a=a_*:A=A_1\to\A_a$. Therefore,
Theorem \ref{mainthcl}$(ii)$  and Lemma \ref{MacLane} give the
existence of a symmetric monoidal equivalence $$ \s(G,\A,h)\simeq
\s(G,A,0),$$ whence a symmetric monoidal equivalence
$\s(G,A,0)\simeq \mathcal{P}$ follows.

$(ii)$ This follows directly form Theorem \ref{mainthcl}$(ii)$.
\end{proof}

\begin{remark} As in Remark \ref{r11}, the classification result above can be formulated in terms of an equivalence between the category of strictly commutative Picard
categories, with iso-classes of symmetric monoidal functors as morphisms, and the category of pairs $(G,A)$ of abelian groups, with morphisms
$$(i,\psi, k):(G,A)\to (G',A')$$
triples consisting of two group homomorphisms $i:G\to G'$, $\psi:A\to A'$, and a cohomology class $k\in H^2(G,A')=\mathrm{Ext}_{\mathbb{Z}}(G,A')$, where composition is given by $$(i',\psi', k')(i,\psi, k)=(i'i,\psi'\psi, \psi'_*(k)+i^*(k')).$$

\end{remark}

\end{document}